%

\documentclass[imslayout,preprint]{imsart}

\usepackage[english]{babel}


\usepackage[pdftex,colorlinks,%
           bookmarks=true,%
           pdftitle=Clustring\ and\ percolation\ of\ point\ processes,%
           pdfauthor=B.\ Blaszczyszyn%
          \ -\  D.\ Yogeshwaran]{hyperref}

\usepackage{url} 

\usepackage{amsthm,amsmath,amssymb}

\RequirePackage[numbers]{natbib} 
\RequirePackage{hypernat}
\def\bdoi#1{\ignorespaces}  

\usepackage{graphicx}
\graphicspath{{./}{./figures/}{../subpoisson/figures/}{../R/}}

\RequirePackage[colorlinks,citecolor=blue,urlcolor=blue]{hyperref}



\startlocaldefs

\newcommand{\EXCLUDE}[1]{}
\newcommand{\be}{\begin{equation}}
\newcommand{\ee}{\end{equation}}
\newcommand{\bea}{\begin{eqnarray}}

\newcommand{\non}{\nonumber}
\newcommand{\eea}{\end{eqnarray}}
\newcommand{\sP}{\mathsf{P}}  
\newcommand{\pr}[1]{\mathsf{P}\left( #1 \right)}
\newcommand{\sE}{\mathsf{E}} 
\newcommand{\EXP}[1]{\mathsf{E}\!\left(#1\right) }


\newcommand{\1}[1]{\mathsf{1}\!\left[\,#1\,\right] }
\newcommand{\remove}[1]{}

\pagestyle{plain}

\newtheorem{thm}{Theorem}[section]
\newtheorem{cor}[thm]{Corollary}
\newtheorem{lem}[thm]{Lemma}
\newtheorem{prop}[thm]{Proposition}
\newtheorem{remark}[thm]{Remark}
\newenvironment{rem}[1][]{\begin{remark}[#1]\rm}{\end{remark}}
\newtheorem{defin}[thm]{Definition}
\newenvironment{defn}[1][]{\begin{defin}[#1]\rm}{\end{defin}}
\newtheorem{example}[thm]{ Example}
\newenvironment{exe}[1][]{\begin{example}[#1]\rm}{\end{example}}

\newtheorem{Condition}[thm]{Condition}

\def\rar{\rightarrow}
\def\la{\langle}
\def\ra{\rangle}

\newcommand{\lam}{\lambda}
\newcommand{\Lam}{\Lambda}


\newcommand{\sg}{\sigma}

\def\mR{\mathbb{R}}
\def\mZ{\mathbb{Z}}

\def\mN{\mathbb{N}}

\def\mE{\mathbb{E}}
\def\mL{\mathbb{L}}

\newcommand{\ovr}{{\overline r}}

\newcommand{\md}{\text{d}}

\newcommand{\cX}{{\mathcal X}}

\newcommand{\cL}{\mathcal{L}}

\def\1{\mathbf{1}}

\endlocaldefs

\begin{document}

\begin{frontmatter}

\title{Clustering and percolation of point processes}
\runtitle{Clustering and  percolation of point processes}

\begin{aug}

\author{\fnms{Bart{\l}omiej}
\snm{B{\l}aszczyszyn}\corref{}\thanksref{inria,ens}\ead[label=e1]{Bartek.Blaszczyszyn@ens.fr}} 
\address{Inria/ENS\\
23 av. d'Italie\\
CS 81321\\
75214 Paris Cedex 13\\
France\\
\printead*{e1}}
\affiliation{Inria\thanksmark{inria} and ENS Paris\thanksmark{ens}}

and \author{\fnms{D.}
\snm{Yogeshwaran}\thanksref{Technion}\ead[label=e2]{yogesh@ee.technion.ac.il}\thanksref{t1}}
\thankstext{t1}{DY is grateful to the support of Inria  and ENS Paris where most of
this work was done. DY is thankful to research grants from EADS(Paris), Israel Science Foundation (No: 853/10) and AFOSR (No: FA8655-11-1-3039).}    
\address{Dept. of Electrical Engineering\\
Technion - Israel Institute of Technology\\
Haifa 32000\\
Israel\\
\printead*{e2}}
\affiliation{Technion-Israel Institute of Technology\thanksmark{Technion}, Haifa.}

\runauthor{B. B{\l}aszczyszyn and D. Yogeshwaran}

\end{aug}

\begin{abstract}\
\pdfbookmark[1]{Titlepage}{title} 
We are interested in phase transitions in certain percolation models
on point processes and their dependence on clustering properties of
the point processes. We show that point processes with smaller void probabilities and factorial moment
measures than the stationary Poisson point process exhibit
non-trivial phase transition in the percolation 
of some coverage models based on level-sets of additive
functionals of the point  process. Examples of such point processes
are determinantal point processes, some perturbed lattices,
and more generally, negatively associated point processes.  
Examples of such coverage models are $k$-coverage in the Boolean model
(coverage by at least $k$ grains) 
and SINR-coverage (coverage if
the signal-to-interference-and-noise ratio is large).
In particular, we answer in affirmative the hypothesis 
of existence of phase transition
in the percolation of $k$-faces in the \u{C}ech simplicial complex 
(called also clique percolation) on point processes which cluster less
than the Poisson process.
We also construct a Cox point process, which is  "more clustered"
than the Poisson point process and  whose Boolean model percolates for
arbitrarily small radius. This shows that  clustering (at least, as
detected by our specific tools) does not
always ``worsen'' percolation, as well as that 
upper-bounding this clustering  by a Poisson process 
is a consequential assumption for the phase transition to hold.
\end{abstract}
\begin{keyword}[class=AMS]
\kwd[Primary ]{60G55, 
               82B43, 
               60E15
}
\kwd[; secondary ]{60K35, 
                   60D05, 
                   60G60
}
\end{keyword}
\begin{keyword}
\kwd{point process}
\kwd{Boolean model}
\kwd{percolation}
\kwd{phase transition}
\kwd{shot-noise fields}
\kwd{level-sets}
\kwd{directionally convex ordering}
\kwd{perturbed lattices}
\kwd{determinantal}
\kwd{permanental point processes}
\kwd{sub-(super-)Poisson point processes.}
\end{keyword}
\end{frontmatter}
\clearpage 
\section{Introduction}
\label{sec:intro}

Starting with the work of \cite{Gilbert61}, percolation problems on geometric models defined over the Poisson point process have garnered interest among both stochastic geometers and
network theorists. Relying and building upon the (on-going) successful
study of its discrete counterpart(\cite{Grimmett99}), continuum
percolation has also received considerable attention as evinced in the
monographs \cite{MeeRoy96} and \cite{Penrose03}. Our work shall
deviate from the standard approach of studying geometric models based
on the Poisson point process, by focusing on percolation models
defined over general stationary point processes. In particular, we
shall try to {\em formalize the comparison of ``clustering'' phenomena in point processes  and investigate its impact on percolation 
models.}

The new approach contributes to both
theory and applications. Results regarding percolation  on "clustered"
or "repulsive" point  processes are scarce (we shall
say more in Section~\ref{sec:related}). Our methods help in
a more systematic study of models over general stationary point
processes apart from the ubiquitous Poisson point process. 
In particular, we {\em relate percolation properites of the
geometric models to the more intrinsic properties of point processes
such as moment measures and void probabilities}.
As regards applications, we observe that the Poissonian assumption on point
processes is not always preferable in many models of spatial
networks. In such scenarios, the question of whether clustering or
repulsion increases various performance measures (related to some
property of the underlying geometric graph) or not arises
naturally. Percolation is one such performance measure and we refer
the reader to the upcoming survey \cite[Section 4]{Sollerhaus12} for a
more comprehensive overview of applications of our methods for
comparison of various other properties of geometric graphs. 

With this succinct introduction, we shall now get into our key
percolation model. As a sample of our results, we will present
here in detail the particular case of $k$-percolation alone with a
brief discussion of other results. The point process notions used in the rest of the section
are described formally in Section \ref{s.point_processes}. 

\subsection{$k$-percolation}
\label{sec:results} 
By {\em percolation of a set} (in 
the Euclidean space usually), we mean that the set contains an
unbounded connected subset. We shall also use {\em percolation of a
  graph}, where it means existence of an infinite connected
subgraph. By $k$-percolation in a Boolean model, we understand
percolation of the subset of the space covered by at least $k$ grains
of the Boolean model and more rigorously, we define as follows.  
\begin{defn}[$k$-percolation]
\label{d.k-percol}
Let $\Phi$ be a point process in $\mR^d$, the $d$-dimensional Euclidean space. For $r \geq 0$ and $k \geq 1,$ we define the coverage number field $V_{\Phi,r}(y) :=  \sum_{X_i \in \Phi} \1[y \in B_r(X_i)]$,  where $B_r(x)$ denotes the Euclidean ball of radius $r$ centred at $x$. The $k$-covered set is defined as 
$$C_k(\Phi,r) := \{y : V_{\Phi,r}(y) \geq k \}.$$
Define the {\em critical radius for $k$-percolation} as 
$$ r^k_c(\Phi) := \inf \{r : \pr{\mbox{$C_k(\Phi,r)$ percolates}} > 0 \}\,,$$ 
where, as before, percolation means existence of an unbounded connected subset.
\end{defn}
Note that $C(\Phi,r):= C_1(\Phi,r)$ is the standard Boolean model or
continuum percolation model mentioned in the first paragraph. It can
also be viewed as a graph by taking $\Phi$ as the vertex set and the
edge-set being $E(\Phi,r) := \{(X,Y) \in \Phi^2 : 0 < |X - Y| \leq 2r
\}.$ This is the usual random geometric graph,
called also the Gilbert's disk graph.
 The two notions of
percolation (graph-theoretical and topological) are the same in this
case. The more general set $C_k(\Phi,r)$ can also be viewed as a graph
on $k$-faces of the \v{C}ech complex on $\Phi$ and this shall be
explained later in Remark \ref{rem:cech_complex}. Clearly,
$r_c(\Phi):=r_c^1(\Phi)$ is the {\em critical radius}
of the ``usual'' continuum percolation model on~$\Phi$, and we have 
$r_c(\Phi) \leq r_c^k(\Phi)$.  
We shall use clustering properties of $\Phi$ to get bounds on $r_c^k(\Phi)$.

\subsection{Clustering and percolation --- heuristics}
Before stating our percolation result, 
let us discuss more about clustering and percolation heuristics. Clustering of $\Phi$ roughly means that the points of $\Phi$ lie in clusters (groups) with the clusters being well spaced out. When trying to find the minimal $r$ for which the continuum percolation model $C(\Phi,r)$ percolates, we observe that points lying in the same cluster of $\Phi$ will be connected by edges for some smaller $r$ but points in different clusters need a relatively higher $r$ for having edges between them. Moreover, percolation cannot be achieved without edges between some points of different clusters. It seems to be evident that spreading points from clusters of $\Phi$ ``more homogeneously'' in the space would result in a decrease of the radius $r$ for which the percolation takes place. This is a heuristic explanation why clustering in a point process 
$\Phi$ should increase the critical radius $r_c(\Phi)$ and a similar
reasoning can also be given for $r^k_c(\Phi)$. Our study was motivated by
this heuristic.  

To make a formal statement of the above heuristic, one needs to adopt
a tool to compare clustering properties of point
processes\footnote{The reader should also keep in mind that we are
  interested in looking at point processes of same intensities and
  hence the usual strong order is not a suitable measure.}. In this
regard, our initial choice was  {\em directionally convex} ($dcx$)
order \footnote{The $dcx$ order of random vectors is an integral order
  generated by twice differentiable functions with all their second
  order partial derivatives being non-negative. Its extension  to
  point processes consists in comparison of vectors of number of
  points in every possible finite collection of bounded Borel subsets
  of the space.} (to be formally defined later). It has its roots in
\cite{snorder}, where one shows various results as well as examples
indicating that the $dcx$ order on point processes implies ordering of
several well-known clustering characteristics in spatial statistics
such as Ripley's K-function and  
second moment densities. Namely, a point process that is larger in the
$dcx$ order exhibits more clustering, while having equal mean number
of points in any given set. 

In view of what has been said above, a possible formalization of the
heuristic developed above, that clustering ``worsens'' percolation, 
would be $\Phi_1 \leq_{dcx} \Phi_2$ implies $r_c(\Phi_1)
\leq r_c(\Phi_2)$. The numerical evidences gathered for a certain
class of point processes, called perturbed lattice point processes,
were  supportive of this conjecture (cf~\cite{dcx-clust}). However,
 such a statement is not true in full
generality and we shall provide a counterexample in Section~\ref{sec:ex}.

We can also consider some weaker notions of clustering, for which only moment measures or void
probabilities can be compared. 
\begin{defn}
\label{defn:weak_subP} 
A point process $\Phi$~is said to be {\em weakly sub-Poisson}
if the following two conditions are satisfied:
\begin{eqnarray}
\pr{\Phi(B)=0} & \le & e^{-\EXP{\Phi(B)}} \, \, \, \, \, \, \, \, \, \, \, \, \, \, \, \mbox{($\nu$-weakly sub-Poisson)} \label{e.nu-weakly-sub-Poisson}  \\
\EXP{\prod_{i=1}^k\Phi(B_i)} & \le & \prod_{i=1}^k\EXP{\Phi(B_i)} \, \, \, \, \, \, \, \mbox{($\alpha$-weakly sub-Poisson)} \label{e.alpha-weakly-sub-Poisson}  
\end{eqnarray}
where $B_i\subset\mR^d$ are mutually disjoint bBs and $B$ is any bBs. If only either of the conditions is satisfied, accordingly we call the point process
to be $\nu$-weakly sub-Poisson ($\nu$ stands for void probabilities) or $\alpha$-weakly sub-Poisson ($\alpha$ stands for moment measures).
Similar notions of super-Poissonianity can be defined by reversing the inequalities. 
\end{defn}

Again, larger  values of these characteristics (i.e,void probabilities and factorial moment measures) suggest more
clustering and hence they  also can be used to compare  clustering of point processes.
This comparison is weaker than $dcx$ order and association as well. More details on these notions are
provided in Section \ref{s.Notation}. 

It is explained in~\cite{dcx-clust}, that weak sub-Poissonianity allows
for a slightly different conclusion than the heuristic described before the definition. Suppose that $\Phi_2$ is a stationary Poisson point process
and $\Phi_1$ is a stationary weakly sub-Poisson point process. Then, $\Phi_1$ exhibits the usual 
phase transition $0<r_c(\Phi_1)<\infty$, provided $\Phi_2$
exhibits a (potentially) stronger, ``double phase transition'': 
$0<\underline r_c(\Phi_2)$ and
$\overline r_c(\Phi_2)<\infty$, where $\underline r_c(\Phi_2)$ and
$\overline r_c(\Phi_2)$ are some ``nonstandard'' critical radii~\footnote{%
$\underline r_c(\Phi_2)$ and
$\overline r_c(\Phi_2)$ are critical radii related, respectively, 
to the finiteness of asymptotic of the expected number
of long occupied paths from the origin and the expected number of
void circuits around the origin in suitable discrete
approximations of the continuum model.}
sandwiching $r_c(\Phi_2)$, and exhibiting  opposite monotonicity with
respect to clustering:
$$\underline r_c(\Phi_2)\le \underline r_c(\Phi_1)\le r_c(\Phi_1)
\le \overline r_c(\Phi_1)\le \overline r_c(\Phi_2)\,.$$
Conjecturing that the above ``double phase transition'' 
holds for the Poisson point process $\Phi_2$, one obtains the  result  on
the ``usual'' phase transition for  $r(\Phi_1)$  for all weakly sub-Poisson point processes $\Phi_1$.
From \cite[Proposition 6.1]{dcx-clust}, we know that $\underline
r_c(\Phi_2) \geq (\theta_d)^{-\frac{1}{d}}$, where $\theta_d$ is the
volume of $d$-dimensional Euclidean unit ball. However,  finiteness of $\ovr_c(\Phi_2)$ is not clear and hence, in this paper we will prove the results in a slightly different way. The generality of this method shall be explained below and will be obvious from the results in Section \ref{sec:sub-poisson}.

\subsection{Results}
We can finally state now one of the main results on phase transition in
$k$-percolation which follows as a corollary from Theorem~\ref{thm:k_perc_sub-Poisson_pp} (to be stated later). Recall the definition of weak sub-Poissonianity 
from Definition \ref{defn:weak_subP}. 
\begin{cor}
\label{cor:non_trivial_k_perc} 
Let $\Phi$ be a simple, stationary, weakly sub-Poisson point process of intensity $\lambda$. For $k
\geq 1$, $\lam > 0$, there exist constants $c(\lam)$ and  
$c(\lam,k)$ (not depending on the distribution of $\Phi$) such that
$$ 0 < c(\lam) \leq r_c^1(\Phi) \leq r_c^k(\Phi) \leq c(\lam,k) < \infty. $$
\end{cor}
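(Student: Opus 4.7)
The plan is to establish the two nontrivial bounds separately: the lower bound $r_c^1(\Phi)\ge c(\lam)>0$ uses only $\alpha$-weak sub-Poissonianity, while the upper bound $r_c^k(\Phi)\le c(\lam,k)<\infty$ uses $\nu$-weak sub-Poissonianity. The middle inequality $r_c^1(\Phi)\le r_c^k(\Phi)$ is immediate from the inclusion $C_k(\Phi,r)\subseteq C_1(\Phi,r)$.

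For the lower bound I would imitate the classical Gilbert/Hall subcritical argument for the Poisson Boolean model. Percolation of $C_1(\Phi,r)$ requires an unbounded connected component of the Gilbert graph on $\Phi$ with connection threshold $2r$, and in particular the existence of self-avoiding paths of every length $n$ from some point. Under the reduced Palm distribution at the origin, the expected number of self-avoiding paths of length $n$ is the integral of the factorial moment measure of order $n+1$ over $(n+1)$-tuples with consecutive points within distance $2r$. Assumption (\ref{e.alpha-weakly-sub-Poisson}), extended from products of disjoint bBs to arbitrary Borel sets by a monotone-class argument, bounds this factorial moment measure pointwise by the Poisson one, namely $\lam^{n+1}$ times Lebesgue. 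The resulting estimate $\lam(\lam\theta_d(2r)^d)^n$ is summable in $n$ as soon as $\lam\theta_d(2r)^d<1$, so the expected cluster size is finite and no percolation occurs. Any $c(\lam)<(\lam\theta_d)^{-1/d}/2$ will do.

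For the upper bound I would run a Peierls contour argument driven by void probabilities. Partition $\mR^d$ into a grid of cubes of side $a$ and set $r=\sqrt{d}\,a$, so that any cube $Q$ containing at least $k$ points satisfies $Q\subseteq C_k(\Phi,r)$; call such $Q$ \emph{good}. Face-percolation of good cubes in the $\mZ^d$ adjacency graph thus implies percolation of $C_k(\Phi,r)$. Partition each cube further into $k$ equal sub-cubes; if $Q$ is bad then at least one of its sub-cubes is empty. For a prospective $*$-connected contour of $m$ bad cubes separating the origin from infinity, choose one empty sub-cube per contour cube: by union bound and (\ref{e.nu-weakly-sub-Poisson}) applied to the disjoint union,
\[
\pr{\text{all $m$ contour cubes bad}}\le k^m e^{-\lam m a^d/k}.
\]
Combined with the standard exponential Peierls bound $C(d)^m$ on the number of $*$-contours of size $m$ around a point, choosing $a$ so large that $C(d)\,k\,e^{-\lam a^d/k}<1$ makes the series over $m$ convergent, and a standard truncation (discard contours close to the origin) produces positive probability that no surrounding contour exists. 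Hence good cubes percolate, and one may take $c(\lam,k)=\sqrt{d}\,a(\lam,k)$.

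The main obstacle I anticipate is the measure-theoretic upgrading of (\ref{e.alpha-weakly-sub-Poisson}) from disjoint-product bBs to the full $(n+1)$-fold factorial moment measure needed in the path-count integral; this should go through by approximating the integrand by linear combinations of indicators of small disjoint product cubes and passing to the limit, but it is the step that demands care. By contrast, the upper-bound step is robust, since $\nu$-sub-Poissonianity is designed precisely to deliver joint void-probability bounds on disjoint collections of regions, which is exactly the input that Peierls requires.
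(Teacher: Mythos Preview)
Your proof is correct, but both halves take a different route from the paper. The paper derives the corollary from Theorem~\ref{thm:k_perc_sub-Poisson_pp}, whose proof is built on a single unified device: the Chernoff-type bounds of Proposition~\ref{prop:bounds_shot-noise} on the Laplace transform of additive shot-noise fields, combined with a discretization to the close-packed lattice. For the lower bound the paper bounds $\pr{V^r_\Phi(z_i)\ge 1,\,1\le i\le n}$ via~(\ref{eqn:upper_bd_shot-noise2}) and counts lattice paths; for the upper bound it bounds $\pr{V^r_\Phi(z_i)\le \lceil k/2\rceil-1,\,1\le i\le n}$ via~(\ref{eqn:lower_bd_shot-noise2}) and runs Peierls. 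Your lower bound instead performs the continuum Gilbert path-count directly against the factorial moment measures --- essentially the argument of~\cite[Proposition~6.1]{dcx-clust}, which the paper invokes separately in Corollary~\ref{cor:phase-transition} to sharpen the constant. Your upper bound bypasses the Laplace machinery entirely: the pigeonhole subdivision into $k$ sub-boxes reduces ``bad cube'' to ``some sub-box is empty'', so a single application of~(\ref{e.nu-weakly-sub-Poisson}) to a disjoint union suffices. Your route is more elementary and yields explicit constants more transparently; the paper's shot-noise framework is heavier here but pays off in the SINR applications of Section~\ref{sec:perc_SINR}, where the same Proposition~\ref{prop:bounds_shot-noise} does all the work. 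The measure-theoretic worry you flag about upgrading~(\ref{e.alpha-weakly-sub-Poisson}) to a pointwise bound on factorial moment measures is already addressed in Section~\ref{ss.sub-suuper-Poisson} and is not an obstacle.
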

Examples of stationary weakly sub-Poisson processes 
are {\em perturbed latices with convexly sub-Poisson
replication kernels} and {\em determinantal point processes with trace-class integral kernels}
(cf~\cite{dcx-clust}).  

\paragraph{Further results}
Random field $V_{\Phi,r}(\cdot)$ introduced in
Definition~\ref{d.k-percol} is but one example of {\em an additive
  shot-noise field} (to be defined in Section~\ref{ss:bounds_shot-noise}). 
Replacing the indicator
function in the definition of $V_{\Phi,r}(\cdot)$ by more general
functions of $X_i$, one can get 
more general additive shot-noise fields. $C_k(\Phi,r)$ is an example of an excursion set or
a level-set (i.e, sets of the form $\{ V_{\Phi,r}(\cdot) \geq h \}$ or its complement for some $h \in \mR$) that can be associated with the random field
$V_{\Phi,r}(\cdot)$. In this paper  we shall
develop more general methods suitable for the study of 
percolation of level sets of additive
shot-noise fields. Besides $C_k(\Phi,r)$, we  shall
apply these methods to study percolation of SINR coverage model (coverage by
a signal-to-interference-and-noise ratio;
\cite{sinr_coverage,Dousse_etal_TON,Dousse_etal06}) in non-Poisson setting.
Though, we do not
discuss it here, our methods can also be used to show non-trivial
phase transition in the continuum analogue of word percolation (cf.\cite{Kesten01}). The details can be found in~\cite[Section~6.3.3]{Yogesh_thesis}. 

\subsection{Paper organization} 

The necessary notions, notations
as well as some preliminary results 
are introduced and recalled in Section~\ref{s.Notation}. 
In Section~\ref{sec:sub-poisson} we state and prove our main results
regarding the existence of the phase transition
for percolation models driven by sub-Poisson point processes.
A Cox point process, which is $dcx$ larger 
than the Poisson point process (clusters more) 
and  whose Boolean model percolates for
arbitrarily small radius ($r_c=0$) is provided in the Section~\ref{sec:ex}.

\subsection{Related work}
\label{sec:related}
Let us first remark on 
studies in continuum percolation which are comparisons of different models driven by the same (usually Poisson)
point process. In~\cite{Jonasson01}, it was shown that  the critical intensity for
percolation of the Poisson Boolean model on the plane is minimized when the shape of
the typical grain is a triangle and maximized when it is a centrally symmetric set. Similar result
was proved in~\cite{Roy02} using more probabilistic arguments
for the case when the shapes are taken over the set of all
polygons and the idea was also used for three dimensionial Poisson Boolean models. It is known for many
discrete graphs that bond percolation is strictly easier than site
percolation. A similar result as well as strict
inequalities for spread-out connections in the Poisson random connection model has been proved in~\cite{Franc_etal05,Franc_etal11}.

For determinantal point processes, \cite[Cor. 3.5]{GeorgiiYoo} shows non-existence of percolation for small enough integral kernels 
(or equivalently for small enough radii) via coupling with a Poisson
point process. This shows non-zero critical radius ($r_c>0$) for percolation 
of determinantal point processes. 

Non-trivial critical radius for continuum percolation on 
point processes representing zeros of Gaussian analytic functions,
is shown in \cite{Ghosh12}. These processes are reputed  to cluster
less than Poisson processes, however currently we are not able to make
this comparison formal using our tools.
Also, \cite{Ghosh12} shows uniqueness of infinite clusters 
for both zeros of Gaussian analytic functions and the Ginibre point
process (a special case of determinantal process).

Critical radius of the continuum percolation model on
the hexagonal lattice perturbed by the Brownian motion is studied in a recent pre-print~\cite{BenjaminiStauffer2011}.%
This is an example of our perturbed lattice and as such it is a $dcx$ sub-Poisson point process.%
\footnote{More precisely, at any time $t$ of the evolution of the Brownian motion, it is $dcx$ smaller than  a non-homogeneous
Poisson point process of some intensity which depends on~$t$, and converges to the homogeneous one for $t\to\infty$.}
It is shown that for a short enough time of the evolution of the Brownian motion the critical radius is not larger than that of the
non-perturbed lattice.  This result is shown by some coupling  in the sense of set inclusion of point processes.

Many other inequalities in percolation theory depend on such coupling arguments (cf. e.g.~\cite{Liggett97}), which  for obvious reasons are not suited to comparison of point
processes with the same mean measures. 

\section{Notions, notation and basic observations}
\label{s.Notation}
\subsection{Point processes}
\label{s.point_processes}
Let $\texttt{B}^d$ be the Borel $\sg$-algebra and $\texttt{B}_b^d$ be
the $\sg$-ring of {\em bounded (i.e., of compact
  closure) Borel subsets} (bBs) in the
$d$-dimensional Euclidean space $\mR^d$. Let $\mN^d =
\mathbb{N}(\mR^d)$ be the space of non-negative Radon (i.e., finite on
bounded sets) counting measures on $\mR^d$. The Borel $\sg$-algebra $\mathcal{N}^d$ is generated by the
mappings $\mu \mapsto \mu (B)$ for all $B$ bBs. A point process
$\Phi$ is a random element in $(\mN^d,\mathcal{N}^d)$ i.e, a
measurable map from a probability space $(\Omega,\mathcal{F},\sP)$ to
$(\mN^d,\mathcal{N}^d)$. Further, we shall say that a point process $\Phi$ is
simple if a.s. $\Phi(\{x\}) \leq 1$ for all $x \in \mR^d$. As always, a point process on $\mR^d$ is said to be {\em stationary} if its 
distribution is invariant with respect to translation by vectors in~$\mR^d$. 

The measure  $\alpha^{k}(\cdot)$ defined by $\alpha^{k}(B_1\times\ldots\times B_k) = \EXP{\prod_{i=1}^k \Phi(B_i)}$ for all (not
necessarily disjoint) bBs $B_i$  ($i=1,\ldots, k$) is called {\em $k\,$th order moment
measure} of $\Phi$. For simple point processes, the truncation of the measure $\alpha^k(\cdot)$ to the subset
$\{(x_1,\ldots,x_k)\in (\mR^{d})^k: x_i\not=x_j,\; \text{for\;} i\not=j\}$ is equal to the {\em $k\,$th order factorial moment
measure} $\alpha^{(k)}(\cdot)$. To explicitly denote the dependence on
$\Phi$, we shall sometimes write as $\alpha_{\Phi}^{k}$ for the moment
measures and similarly for factorial moment measures. This is the standard framweork for point processes and more generally, random measures
(see \cite{Kallenberg83}). 

We shall be very brief in our introduction to stochastic ordering of point processes necessary for our purposes. We shall start with a discussion on weakly sub-Poisson point processes
defined in Definition \ref{defn:weak_subP}. 

\subsection{Weak sub- and super-Poisson point processes}
\label{ss.sub-suuper-Poisson}
Note that, $\alpha$-weakly sub-Poisson (super-Poisson) point process have
factorial moment measures $\alpha^{(k)}(\cdot)$ 
smaller (respectively larger) than those of
the Poisson point process of the same mean measure;
inequalities hold everywhere provided  the point processes is simple
and ``off the diagonals'' otherwise.
Recall also that moment measures $\alpha^{k}(\cdot)$ of a general
point process can be expressed as non-negative 
combinations of its factorial moment measures 
(cf \cite[Ex.~5.4.5, p.~143]{DVJI2003}). Consequently,
simple, {\em $\alpha$-weakly sub- (super-)Poisson point processes 
have also  moment measures $\alpha^{k}(\cdot)$ 
smaller (larger) than those of Poisson point process.} 

The reason for using the adjective weak will be clear once we introduce the stronger notion of {\em directionally convex
ordering}. This will be very much needed for the example presented in Section \ref{sec:ex} but not for the results of Section \ref{sec:sub-poisson}. So, the reader may skip the following subsection now and return back to it later when needed. 

\subsection{Directionally convex ordering}
Let us quickly introduce the theory of directionally convex
ordering. We refer the reader to \cite[Section 3.12]{Muller02} for a
more detailed introduction.

For a function $f:\mR^k \rar \mR$, define the discrete differential operators as $\Delta_{\epsilon}^if(x) := f(x + \epsilon e_i) - f(x),$ where $\epsilon > 0, 1 \leq i \leq k$ and $\{e_i\}_{1 \leq i \leq k}$ are the canonical basis vectors for $\mR^k$. Now, one introduces the following families of  {\em Lebesgue-measurable} functions on $\mR^k$:
A function $f:\mR^k \rar \mR$ is said to be {\em directionally convex}~($dcx$) if for every $x \in \mR^k, \epsilon,\delta > 0, i,j \in \{1,\ldots,k\}$, we have that $\Delta_{\epsilon}^i \Delta_{\delta}^jf(x) \geq 0.$ We abbreviate {\em increasing} and $dcx$ by $idcx$ and {\em decreasing} and $dcx$ by $ddcx$. There are various equivalent definitions of these and other multivariate functions suitable for dependence ordering (see \cite[Chapter 3]{Muller02}).

Unless mentioned, when we state $\sE(f(X))$ for a function $f$ and a random vector $X$ , we assume that the expectation exists.
Assume that $X$ and $Y$ are real-valued random vectors of the
same dimension. Then  {\em $X$ is said to be less than $Y$ in
$dcx$ order} if $\sE(f(X)) \leq \sE(f(Y))$ for all $f$ $dcx$
such that both the expectations are finite. We shall
denote it as $X \leq_{dcx} Y$.
This property clearly regards only the distributions of $X$ and $Y$,
and hence sometimes we will say that the law of $X$ is less in
$dcx$ order than that of $Y$.

A point process $\Phi$ on $\mR^d$
can be viewed as the random
field $\{\Phi(B)\}_{B \in \texttt{B}_b^d}$. As the $dcx$ ordering for random fields is defined via comparison of
their finite dimensional marginals, for two
point processes on $\mR^d$, one says that  $\Phi_1(\cdot) \leq_{dcx} \Phi_2(\cdot)$, if for any $B_1, \ldots, B_k$ bBs in $\mR^d$,
\begin{equation}
\label{defn:dcx_rm}
(\Phi_1(B_1),\ldots,\Phi_1(B_k)) \leq_{dcx} (\Phi_2(B_1),\ldots,\Phi_2(B_k)).
\end{equation}
The definition is similar for other orders, i.e., those defined by $idcx,\allowbreak ddcx$ functions.
It was shown in~\cite{snorder} that it is enough to verify the above
condition for $B_i$ mutually disjoint.

In order to avoid technical difficulties, we will
consider here only point processes whose  {\em mean measures} $\sE(\Phi(\cdot))$
are Radon (finite on bounded sets). For such point processes, $dcx$ order is a
transitive order.
Note also that
$\Phi_1(\cdot) \leq_{dcx} \Phi_2(\cdot)$ implies the {\em equality
of their  mean measures}: $\sE(\Phi_1(\cdot)) =\sE(\Phi_2(\cdot))$ as both $x$ and $-x$ are $dcx$ functions on
$\mR$.
For more details on $dcx$ ordering
of point processes and random measures, see~\cite{snorder}.

\subsection{Examples}
\label{ss:sim_ex}
We now concentrate on comparison of point processes to the Poisson point process of same
mean measure. Following~\cite{dcx-clust}  we will call a point process
{\em$dcx$ sub-Poisson} (respectively {\em
 $dcx$  super-Poisson}) if it is smaller (larger) in $dcx$ order than the
Poisson point process (necessarily of the same mean measure).
For simplicity, we will just refer to them as sub-Poisson or super-Poisson point process
omitting the word $dcx$. 

From \cite[Proposition 3.1 and Fact 3.2]{dcx-clust}, we can see that weak sub- and super-Poissonianity are actually
weaker than that of $dcx$ sub- and super-Poissonianity respectively. Interestingly, they are also weaker than the notion
of association (see \cite[Section 2]{dcx-clust}). More precisely, it is shown in~\cite[Cor. 3.1]{dcx-clust} that under very mild regularity conditions, positively associated 
point processes are weakly super-Poisson, while negatively associated point processes are weakly sub-Poisson.

We list here briefly some examples of $dcx$ and weak sub-Poisson and super-Poisson point processes. It was observed in~\cite{snorder} that some doubly-stochastic Poisson
(Cox) point processes, such as Poisson-Poisson cluster point processes and, more generally, L\'{e}vy based Cox point processes are super-Poisson. \cite{BurtonWaymire1985} provide examples of positively associated Cox point processes, namely those driven by a positively associated random measure. 

A rich class of point processes called the perturbed lattices, including both sub- and super-Poisson point processes,  is provided in~\cite{dcx-clust} (see Section~\ref{sec:ex} for one of the simpler perturbed lattices). These point processes can be seen as toy models for determinantal and permanental point processes; cf.~\cite{Ben09}. Regarding these latter point processes, it is shown in~\cite{dcx-clust} that determinantal and permanental point processes are weakly sub-Poisson and weakly super-Poisson respectively.

\section{Non-trivial phase transition for percolation models on
 sub-Poisson point processes}
\label{sec:sub-poisson}


We will be particularly interested in  percolation models on
level-sets of additive shot-noise fields. The rough idea is as
follows: level-crossing probabilities for these models can be bounded
using Laplace transform of the underlying point process. For weakly
sub-Poisson point process, this can further be bounded by the Laplace
transform of the corresponding Poisson point process, which has a
closed-form expression. For 'nice' response functions of the
shot-noise, these expressions are amenable enough to deduce the
asymptotic bounds that are good enough to use the standard arguments
of percolation theory. Hence, we  can deduce percolation or
non-percolation of a suitable discrete approximation of the model. 
%
\remove{
\begin{Condition}
\label{Condn:DDCX_NA}
\begin{enumerate}
\item $\Phi \leq_{ddcx} \Phi_{\lam},$ where $\Phi_\lambda$ denotes the homogeneous Poisson point process with intensity $\lambda$ on $\mR^d$.
\item $\Phi$ is a negatively associated point process such that $\Phi(B) \leq_{ddcx} \Phi_{\lam}(B)$ for all $B$ bBs. 
\end{enumerate}
\end{Condition}  }
%


\subsection{Bounds on Shot-Noise fields}
\label{ss:bounds_shot-noise}

Denote by  
$$V_{\Phi}(y) := \sum_{X \in \Phi}\ell(X,y)$$
the (additive) shot-noise field generated by a point process $\Phi$ where $\ell(\cdot,\cdot) : \mR^d\times\mR^d \to \mR_+$ is called as the response function. The response function is assumed to be Lebesgue measurable in its first co-ordinate. We shall start with two complimentary results about ordering of Laplace transforms of weakly sub-Poisson (super-Poisson) point processes which are needed for the  Proposition \ref{prop:bounds_shot-noise} that follows them. This key proposition will provide us with bounds on level-crossing probabilities of shot-noise fields which drive all the proofs that follow later in the section. We shall prove the two lemmas only for the case of weakly sub-Poisson point processes and the analagous results for weakly super-Poisson point processes follow via similar arguments. 
\begin{lem}
\label{lem.moments_Laplace}
Assume that $\Phi$ is a simple point process of a Radon mean measure $\alpha$.
If $\Phi$ is $\alpha$-weakly sub-Poisson then for all $y \in \mR^d$,
\begin{equation}
\label{e.moments-Laplace}
\EXP{e^{V_{\Phi}(y)}} \le 
\exp\left[\int_{\mR^d}(e^{\ell(x,y)}-1)\,\alpha(\md x)\right] \,.
\end{equation}
If $\Phi$ is $\alpha$-weakly super-Poisson then the above inequality is reversed.     
\end{lem}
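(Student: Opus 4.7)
The plan is to expand $e^{V_\Phi(y)}$ as a series over finite sub-collections of $\Phi$, convert it into an integral series against the factorial moment measures of $\Phi$, and then apply the $\alpha$-weakly sub-Poisson comparison term by term. The right-hand side of \eqref{e.moments-Laplace} is exactly what that same series becomes when $\alpha^{(k)}$ is replaced by the product mean measure $\alpha^{\otimes k}$, i.e.\ by the Poisson factorial moment; so the whole proof reduces to a comparison of $\alpha^{(k)}$ with $\alpha^{\otimes k}$ integrated against a non-negative kernel.

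Concretely, I would set $g(x):=e^{\ell(x,y)}-1\ge 0$ and first establish the identity
$$e^{V_\Phi(y)} = \prod_{X\in\Phi}\bigl(1+g(X)\bigr) = \sum_{k=0}^\infty \frac{1}{k!}\sum_{(X_1,\ldots,X_k)\in\Phi^k_{\neq}} g(X_1)\cdots g(X_k),$$
where $\Phi^k_{\neq}$ denotes ordered $k$-tuples of pairwise distinct points of $\Phi$. This product-to-sum identity, valid in $[0,\infty]$, follows from monotone convergence applied to the partial products over an a.s.\ enumeration of $\Phi$. Then I would take expectations, exchange $\sE$ with the sums by Tonelli (all summands are non-negative), and invoke the defining relation of the factorial moment measure to obtain
$$\EXP{e^{V_\Phi(y)}} = \sum_{k=0}^\infty \frac{1}{k!}\int_{(\mR^d)^k} g(x_1)\cdots g(x_k)\,\alpha^{(k)}(\md x_1\cdots\md x_k).$$

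To finish, I would invoke the hypothesis: as recalled in Section~\ref{ss.sub-suuper-Poisson}, condition \eqref{e.alpha-weakly-sub-Poisson} together with simplicity of $\Phi$ forces $\alpha^{(k)}$ to be dominated by $\alpha^{\otimes k}$ on every Borel rectangle. Since $g\ge 0$, this gives the term-by-term bound
$$\int g(x_1)\cdots g(x_k)\,\alpha^{(k)}(\md x_1\cdots\md x_k) \le \Bigl(\int g(x)\,\alpha(\md x)\Bigr)^{k},$$
and summing over $k$ collapses the right-hand side to $\exp\bigl[\int(e^{\ell(x,y)}-1)\alpha(\md x)\bigr]$, proving \eqref{e.moments-Laplace}. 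Reversing every inequality settles the $\alpha$-weakly super-Poisson case. The main delicate point is the product-to-sum identity when $\Phi$ is infinite, but because $g$ is non-negative the whole argument runs in $[0,\infty]$ without any integrability hypothesis on $\ell$, and the inequality is trivially true whenever the right-hand side is $+\infty$.
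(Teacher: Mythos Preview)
Your argument is correct and close in spirit to the paper's, but the decomposition you use is genuinely different. The paper expands $e^{V_\Phi(y)}$ via the Taylor series $\sum_k V_\Phi(y)^k/k!$, which produces integrals of $\ell(x_1,y)\cdots\ell(x_k,y)$ against the \emph{ordinary} moment measures $\alpha_\Phi^k$; it then invokes the fact (recalled in Section~\ref{ss.sub-suuper-Poisson}) that for simple $\alpha$-weakly sub-Poisson processes the moment measures $\alpha_\Phi^k$ are dominated by those of the Poisson process, and identifies the resulting series as the Poisson Laplace functional. Your route instead rewrites $e^{V_\Phi(y)}=\prod_X(1+g(X))$ with $g=e^{\ell(\cdot,y)}-1$, expands the product over distinct tuples, and lands directly on integrals against the \emph{factorial} moment measures $\alpha^{(k)}$; the hypothesis $\alpha^{(k)}\le\alpha^{\otimes k}$ is then applied without the intermediate passage through $\alpha^k$. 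Your approach is arguably a little more direct, since the $\alpha$-weak sub-Poisson condition is stated in terms of factorial moments in the first place, and the exponential on the right of~\eqref{e.moments-Laplace} appears immediately as $\sum_k(\int g\,\md\alpha)^k/k!$ rather than via recognition of the Poisson Laplace functional. The paper's route, on the other hand, avoids the product-to-sum combinatorics and works for non-simple $\Phi$ once one knows $\alpha_\Phi^k\le\alpha_{\Phi_\alpha}^k$.
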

\begin{proof} 
From the known representation of the Laplace transform of a functional
of Poisson point process $\Phi_\alpha$ of intensity measure $\alpha$
(cf~\cite[eqn.~9.4.17 p.~60]{DVJII2007}), we
observe that the RHS of (\ref{e.moments-Laplace}) is the same as
$\EXP{e^{V_{\Phi_{\alpha}}(y)}}$. So, the rest of the proof will be
only concerned with proving that for any $y \in \mR^d$, 
\begin{equation}
\label{eqn:ineq1_shot-noise}
 \EXP{e^{V_{\Phi}(y)}} \leq  \EXP{e^{V_{\Phi_{\alpha}}(y)}}.
\end{equation}
From Taylor's series expansion for the exponential function and the positivity of summands, we get that
$$\EXP{e^{V_{\Phi}(y)}} =1+\sum_{k=1}^\infty
\int_{\mR^{dk}}\ell(x_1,y)\ldots\ell(x_k,y)\,\alpha_\Phi^{k}(\md(x_1,\ldots,x_k)\,,
$$
where $\alpha_\Phi^{k}$ are moment measures of $\Phi$.
By the assumption that $\Phi$ is simple and $\alpha$-weakly
sub-Poisson  $\alpha_\Phi^{k} \le \alpha_{\Phi_\alpha}^{k}$,
which completes the proof.
\end{proof}

\begin{lem}
\label{lem.voids_Laplace}
Assume that $\Phi$ is a simple point process of a Radon mean measure $\alpha$.
$\Phi$ is $\nu$-weakly sub-Poisson if and only if for all $y \in \mR^d$,
\begin{equation}
\label{e.void-Laplace}
\EXP{e^{-V_{\Phi}(y)}} \le 
\exp\left[\int_{\mR^d}(e^{-\ell(x,y)}-1)\,\alpha(\md x)\right] \,.
\end{equation}
$\Phi$ is $\nu$-weakly super-Poisson if and only if the above inequality is reversed.   
\end{lem}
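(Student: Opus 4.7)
The strategy is to prove the two directions of the equivalence separately. The \emph{only if} direction is straightforward: specialising $\ell(x,y) = c\,\1_B(x)$ for a bBs $B$ and $c>0$ turns the claimed inequality into $\EXP{e^{-c\Phi(B)}} \le \exp((e^{-c}-1)\alpha(B))$, and letting $c\to\infty$ gives $\pr{\Phi(B)=0}\le e^{-\alpha(B)}$, which is $\nu$-weak sub-Poissonianity.

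For the \emph{if} direction, the plan is to realise $\EXP{e^{-V_\Phi(y)}}$ as a void probability on an enlarged space and compare with the Poisson counterpart. Attach to each $X_i\in\Phi$ an independent unit-rate exponential mark $T_i$ and form the marked point process $\hat\Phi = \sum_i \delta_{(X_i,T_i)}$ on $\mR^d\times[0,\infty)$, whose mean measure is $\alpha(\md x)\otimes e^{-t}\,\md t$. For the hypograph $A_y := \{(x,t) : t<\ell(x,y)\}$, conditioning on $\Phi$ yields $\pr{\hat\Phi(A_y)=0\mid\Phi} = \prod_i e^{-\ell(X_i,y)} = e^{-V_\Phi(y)}$, so that
\[
\EXP{e^{-V_\Phi(y)}} = \pr{\hat\Phi(A_y)=0}, \qquad \EXP{\hat\Phi(A_y)} = \int(1-e^{-\ell(x,y)})\,\alpha(\md x).
\]
By the marking theorem, $\widehat{\Phi_\alpha}$ is itself Poisson on $\mR^d\times[0,\infty)$ with the same mean measure, and so its void probability on $A_y$ equals $\exp(-\EXP{\hat\Phi(A_y)})$, which is precisely the right-hand side of the target inequality. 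Thus the Laplace bound is equivalent to the $\nu$-type void-probability bound for $\hat\Phi$ on hypograph sets.

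Next I would approximate $\ell(\cdot,y)$ monotonically from below by nonnegative step functions $\ell_n = \sum_k a_{k,n}\,\1_{B_{k,n}}$ on disjoint bBs $B_{k,n}$. For such $\ell_n$,
\[
\EXP{\textstyle\prod_i e^{-\ell_n(X_i,y)}} = \EXP{\textstyle\prod_k e^{-a_{k,n}\Phi(B_{k,n})}} = \pr{\tilde\Phi_n\bigl(\textstyle\bigcup_k B_{k,n}\bigr)=0},
\]
where $\tilde\Phi_n$ denotes the independent Bernoulli thinning of $\Phi$ that retains a point in $B_{k,n}$ with probability $1-e^{-a_{k,n}}$. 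Applying $\nu$-weak sub-Poissonianity to $\tilde\Phi_n$ on the bBs $\bigcup_k B_{k,n}$ yields the bound $\exp(-\sum_k (1-e^{-a_{k,n}})\alpha(B_{k,n})) = \exp(\int(e^{-\ell_n}-1)\,\alpha(\md x))$, and the lemma follows by monotone convergence as $n\to\infty$. The super-Poissonian statement is analogous with every inequality reversed.

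The hard part is precisely the step ``applying $\nu$-weak sub-Poissonianity to $\tilde\Phi_n$'', which amounts to proving that independent Bernoulli thinnings of a $\nu$-weakly sub-Poisson process remain $\nu$-weakly sub-Poisson. Equivalently, one must promote the single-set bound $\pr{\Phi(B)=0}\le e^{-\alpha(B)}$ to the functional bound $\EXP{\prod_k(1-c_k)^{\Phi(B_k)}}\le \exp(-\sum_k c_k\alpha(B_k))$ for disjoint bBs $B_k$ and weights $c_k\in[0,1]$. It is crucial here that the $\nu$-condition is assumed for \emph{every} bBs; the extension is obtained by a monotone-class argument on the enlarged space $\mR^d\times[0,1]$ with iid uniform marks, as carried out in~\cite{dcx-clust}.
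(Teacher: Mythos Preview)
Your proposal is correct and tracks the paper's proof closely: the easy direction is identical; for the hard direction both you and the paper reduce to nonnegative simple functions, identify $\EXP{\prod_k e^{-a_k\Phi(B_k)}}$ as the void probability of an independent Bernoulli thinning of $\Phi$, and isolate the crux as showing that such thinnings preserve the $\nu$-comparison with Poisson. Your exponential-mark/hypograph reformulation is a pleasant conceptual gloss, though you do not actually use it beyond motivation and fall back to the same step-function reduction as the paper.

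The one point worth sharpening is the thinning-preservation step, which you defer to a reference. The paper proves it \emph{in situ} by a null-array-of-partitions coupling: one marks \emph{cells} $B_{n,j}$ (not points) by independent Bernoullis $\xi_{n,j}$, so that conditional on the $\xi$'s the cell-thinned void probability equals $\nu(B_n^1)$ for the random bBs $B_n^1=\bigcup_{j:\xi_{n,j}=1}B_{n,j}$, to which the $\nu$-hypothesis applies directly; simplicity of $\Phi$ then yields convergence of the cell-based thinning to the true independent thinning as the mesh shrinks. If by ``iid uniform marks on $\mR^d\times[0,1]$'' you intend to mark \emph{points} and then invoke the $\nu$-hypothesis for the marked process on sets of the form $B\times(c,1]$, beware that this is circular: the resulting inequality is precisely $\EXP{(1-c)^{\Phi(B)}}\le e^{-c\,\alpha(B)}$, which is what you are trying to prove. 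Some cell-based device (or an equivalent trick that conditions on auxiliary randomness independent of the point \emph{locations}) is genuinely needed here.
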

\remove{
\begin{proof}
Let $t\ge0$ and denote $s=e^{-t}$. Note that 
$$\EXP{e^{-t\Phi(B)}}=\sum_{k=0}^\infty s^k\pr{\Phi(B)=k}=\nu'(B) = \pr{\Phi'(B)=0},$$
where $\Phi'$ is the point process obtained by the independent
thinning of $\Phi$ with deletion probability $s$.
Thus, it is enough to prove that sub-(super-) poissonianity of  $\Phi$ 
implies the same, respectively, for $\Phi'$. This follows from a more general
observation that independent thinning preserves ordering of void
probabilities of simple point processes, which we show in the remaining part of the proof using
a coupling argument.

Consider a  null-array of partitions
$\{B_{n,j} \subset \mR \}_{n \geq 1, j \geq 1}$ of $\mR^d$  
\footnote{i.e., $\{B_{n,j}\}_{j  \geq 1}$ form a finite partition of $\mR^d$ 
for every $n$ and $\max_{j \geq 1}\{|B_{n,j}|\} \to 0$ as $n \to \infty$ where
$|\cdot|$ denotes the diameter in any fixed metric;
see \cite[page~11]{Kallenberg83})}.  For every $x \in \mR$, let
$j(n,x)$ be  the unique index such that $x \in B_{n,j(n,x)}$.
Let $\{\xi\}=\{\xi_{n,j}\}_{n \geq 1, j \geq 1}$ be a family of independent
Bernoulli random variables Ber($s$) defined on a common probability
space with $\Phi$ and independent of it.
Define the family of  point processes  $\Phi_n = \sum_{X_k \in \Phi}
\xi_{n,j(n,X_i)}\delta_{X_i}$. For given $n\ge1$, $\Phi_n$
is a (possibly dependent) thinning of $\Phi$. Moreover,  because
$\Phi$ is simple, $\Phi_n(B)$ converges 
in distribution to $\Phi'(B)$ (recall, $\Phi'$  is an independent
thinning of $\Phi$ with retention probability $s$). The result follows by conditioning
on $\{\xi\}$: 
$$\nu'(B)=\lim_{n\to\infty}\pr{\Phi_n(B)=0} = \lim_{n\to\infty}\EXP{\pr{\Phi(B^1_n)=0\,|\,(\xi)}} = \lim_{n\to\infty}\EXP{\nu(B^1_n)},$$
with  $B_n^1=\bigcup_{j:\xi_{n,j}=1}B_{n,j}$. This completes the proof.
\end{proof}
}
\begin{proof}
Firstly, let us prove the easy implication by assuming that (\ref{e.void-Laplace}) holds. Let $B$ be a bBS and set $\ell(x,y) = t\1[x \in B]$ for $t > 0$. Then $V_{\Phi}(y) = t\Phi(B)$ and so we get the required inequality (\ref{e.nu-weakly-sub-Poisson}) to prove $\nu$-weak sub-Poissonianity of $\Phi$ :
$$\nu(B) = \lim_{t \to \infty}\EXP{e^{-t\Phi(B)}} \leq \lim_{t \to \infty} \exp\left[(e^{-t} -1) \alpha(B)\right] = e^{-\alpha(B)},$$
where the inequality is due to (\ref{e.void-Laplace}) for our specific choice of $\ell$.

Now for the reverse implication, assume that $\Phi$ is $\nu$-weakly sub-Poisson. As with many other proofs, we shall only prove the inequality in the case of simple functions i.e, $\ell(\cdot,y) = \sum_{i=1}^k t_i \1[x \in B_i]$ for disjoint bBs $B_i, i=1,\ldots,k$ and appeal to standard-measure theoretic arguments for extension to the general case. Thus for a simple function $\ell(\cdot,y)$, we need to prove the following: 
\begin{equation}
\label{e.void-simple-Laplace}
\EXP{e^{-\sum_i t_i\Phi(B_i)}}\le 
\prod_i \exp\left[\alpha(B_i)(e^{-t_i}-1)\right]\,.
\end{equation}
Setting $s_i = e^{-t_i}$, let $\Phi'$ be the 
thinned point process obtained from $\Phi$ by deleting points
independently with probability $p(x)$, where  $p(x) = s_i$ for  points
$x \in B_i$ and  $p(x) \equiv 1$ outside $\bigcup_i B_i$. Similary, we define $\Phi'_{\alpha}$ for the Poisson point process $\Phi_{\alpha}$ of intensity measure $\alpha$. Thus, we have that
$$\EXP{\prod_i s_i^{\Phi(B_i)}} = \sum_{n_1,\ldots,n_k \geq 0}\prod_i s_i^{n_i} \pr{\Phi(B_i) = n_i, 1 \leq i \leq k} = \pr{\Phi'(\bigcup_i B_i)=0}.$$
Now to prove (\ref{e.void-simple-Laplace}), it suffices to prove that for any bBs $B$,
$$ \pr{\Phi'(B)=0} \leq \pr{\Phi'_{\alpha}(B)=0} .$$
This follows from a more general observation that independent thinning preserves ordering of void probabilities of simple point processes, which we show in the remaining part of the proof using
a coupling argument.

Consider a  null-array of partitions $\{B_{n,j} \}_{n \geq 1, j \geq 1}$ of $\mR^d$  
\footnote{i.e., $\{B_{n,j}\}_{j  \geq 1}$ form a finite partition of $\mR^d$ 
for every $n$ and $\max_{j \geq 1}\{|B_{n,j}|\} \to 0$ as $n \to \infty$ where
$|\cdot|$ denotes the diameter in any fixed metric;
see \cite[page~11]{Kallenberg83})}. Further, we assume that either $B_{n,j} \subset B_i$ for some $i \in \{1,\ldots,k\}$ or $B_{n,j} \cap (\cup_i B_i) = \emptyset$. Such a choice of partition can be always made by refining any given partition $\{B'_{n,j} \}_{n \geq 1, j \geq 1}$ to $\{B'_{n,j} \cap B_i \}_{i \in \{1,\ldots,k\}, n \geq 1, j \geq 1}$. For every $x \in \mR^d$, let
$j(n,x)$ be  the unique index such that $x \in B_{n,j(n,x)}$. 
For every $n,j$, define $s(n,j) = s_i$ if $B_{n,j} \subset B_i$ else $s(n,j) \equiv 1.$ Thus by the choice of partition, we get that $s(n,j(n,x)) = s_i$ if $x \in B_i$ for some $i \in \{1,\ldots,k\}$ or else $s(n,j(n,x)) = 1$. Let $\{\xi\}=\{\xi_{n,j}\}_{n \geq 1, j \geq 1}$ be a family of independent
Bernoulli random variables $\text{Ber}(1-s(n,j))$ defined on a common probability
space with $\Phi$ and independent of it.
Define the family of  point processes  $\Phi_n = \sum_{X_i \in \Phi}
\xi_{n,j(n,X_i)}\delta_{X_i}$. For given $n\ge1$, $\Phi_n$
is a (possibly dependent) thinning of $\Phi$. Moreover,  because
$\Phi$ is simple, $\Phi_n(B)$ converges 
in distribution to $\Phi'(B)$ (recall, $\Phi'$  is an independent
thinning of $\Phi$ with retention probability $1-s_i$ in $B_i$ and 0
outside $\bigcup_i B_i$). The result follows by conditioning 
on $\{\xi\}$: 
$$\nu'(B)=\lim_{n\to\infty}\pr{\Phi_n(B)=0} = \lim_{n\to\infty}\EXP{\pr{\Phi(B^1_n)=0\,|\,(\xi)}} = \lim_{n\to\infty}\EXP{\nu(B^1_n)},$$
with  $B_n^1=\bigcup_{j:\xi_{n,j}=1}B_{n,j}$. This completes the proof as $\nu(B^1_n) \leq \pr{\Phi_{\alpha}(B_n^1)}$ for all realizations of $\xi$.
\end{proof}

\remove{
\begin{lem}
\label{lem.associated_Laplace}
Assume that $\Phi$ is a simple point process of Radon mean measure~$\alpha$.
If $\Phi$ is negatively associated then the following inequality holds for any $f$ for which the integrals are well defined :
\begin{equation}
\label{e.moments-Laplace2}
\EXP{\exp\left[\int_{\mR^d} f(x)\,\Phi(\md x)\right]}\le 
\exp\left[\int_{\mR^d}(e^{f(x)}-1)\,\alpha(\md x)\right]\,.
\end{equation}
If $\Phi$ is   positively associated  and $\alpha$ is diffuse
then~(\ref{e.moments-Laplace2}) holds with the reversed inequality for any $f$ for which the integrals are well defined.
\end{lem}

\begin{proof}
\sloppypar{Denote $\cL_\Phi(f)
=\EXP{\exp\left[\int_{\mR^d} f\,\Phi(\md x)\right]}$.
For any $f$ consider its positive $f^+=f\1[f\ge0]$ and negative
$f^-=-f\1[f<0]$ parts. It is enough to prove the result assuming 
them to be simple functions  $f^{+}(x)= \sum_{i=1}^{n^+} a^+_i \1[x\in
B^+_i]$ and $f^{-}(x)= -\sum_{i=1}^{n^-} a^-_i \1[x\in B^-_i]$, 
for $a^\pm_i>0$, $B^\pm_i$ disjoint bBs and $n^\pm\ge1$.
The general case follows from standard measure-theoretic arguments. 
Assume that $\Phi$ is negatively associated. 
Because of negative association, with our simple functions,
$\cL_\Phi(f)=\cL_\Phi(f^+-f^-)\le\cL_\Phi(f^+)\cL_\Phi(-f^-)$.
Since $\Phi$ is $\alpha$-weak sub-Poisson, hence by Lemma~\ref{lem.moments_Laplace} 
$\cL_\Phi(f^+)\le\cL_{\Pi_\alpha}(f^+)$. Since $\Phi$ is negatively associated and $e^{-\sum_ix_i}$ is a product of non-negative decreasing functions, $\cL_\Phi(-f^-)=\EXP{\prod_{i=1}^{n^-} e^{-a^-_i\Phi(B^-_i)}} \le\prod_{i=1}^{n^-} \EXP{e^{-a^-_i\Phi(B^-_i)}}$. 
Since $\Phi$ is also $\nu$-weakly sub-Poisson, hence by Lemma~\ref{lem.voids_Laplace},
$\EXP{e^{-a^-_i\Phi(B^-_i)}}\le \exp[\alpha(B^-_i)(e^{-a^-_i}-1)$,
and thus $\cL_\Phi(-f^-)\le\cL_{\Pi_{\alpha}}(-f^-)$,
which completes the proof in the case of negatively associated $\Phi$.
The proof for the case when $\Phi$ positively associated follows the
same lines with the reversed inequalities.}
\end{proof}
}
\begin{prop}
\label{prop:bounds_shot-noise}
Let $\Phi$ be a simple, stationary point
process of  intensity $\lam$.  
\begin{enumerate}
\item If $\Phi$ is $\alpha$-weakly sub-Poisson then we have that for
  any $y_1,\ldots,y_m \in \mR^d$ and $s,h > 0$,  
\begin{equation}
\label{eqn:upper_bd_shot-noise2} \pr{V_{\Phi}(y_i) \geq h, 1 \leq i \leq m} \leq e^{-smh} \exp \left\{\lam \int_{\mR^d}(e^{s\sum_{i=1}^m\ell(x,y_i)} -1) \md x\right\}.
\end{equation}
\item If $\Phi$ is $\nu$-weakly sub-Poisson,  then we have that for any
  $y_1,\ldots,y_m \in \mR^d$ and $s,h > 0$,  
\begin{equation}
\label{eqn:lower_bd_shot-noise2} \pr{V_{\Phi}(y_i) \leq h, 1 \leq i \leq m} \leq e^{smh} \exp \left\{\lam \int_{\mR^d}(e^{-s\sum_{i=1}^m\ell(x,y_i)} -1) \md x\right\}.
\end{equation}
\end{enumerate}
\end{prop}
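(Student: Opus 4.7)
My plan is to combine the Laplace transform bounds from Lemmas~\ref{lem.moments_Laplace} and \ref{lem.voids_Laplace} with standard Chernoff-type exponential Markov inequalities. The key observation is that $\sum_{i=1}^m V_\Phi(y_i)$ is itself an additive shot-noise field driven by $\Phi$, namely $\sum_{i=1}^m V_\Phi(y_i) = \sum_{X \in \Phi}\tilde\ell(X)$ with the aggregated response function $\tilde\ell(x) := \sum_{i=1}^m \ell(x,y_i)$, which is again Lebesgue measurable and non-negative. Stationarity of $\Phi$ with intensity $\lam$ gives $\alpha(\md x) = \lam\,\md x$, so the integrals in the conclusion are just the ones appearing in the lemmas applied to $s\tilde\ell$.

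For part (1), I would start from the trivial inclusion
\[
\{V_\Phi(y_i) \ge h,\; 1 \le i \le m\} \subseteq \Bigl\{\sum_{i=1}^m V_\Phi(y_i) \ge mh\Bigr\},
\]
apply the exponential Markov inequality at parameter $s>0$, obtaining
\[
\pr{V_\Phi(y_i) \ge h,\; 1 \le i \le m} \le e^{-smh}\,\EXP{\exp\bigl(s\tilde V_\Phi\bigr)},
\]
where $\tilde V_\Phi := \sum_{i=1}^m V_\Phi(y_i)$. Since $\Phi$ is $\alpha$-weakly sub-Poisson, Lemma~\ref{lem.moments_Laplace} applied to the response function $s\tilde\ell(\cdot)$ bounds the Laplace transform by $\exp[\lam\int(e^{s\tilde\ell(x)}-1)\,\md x]$, which is exactly the right-hand side of (\ref{eqn:upper_bd_shot-noise2}).

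For part (2), the same strategy works with signs reversed. The inclusion
\[
\{V_\Phi(y_i) \le h,\; 1 \le i \le m\} \subseteq \Bigl\{\sum_{i=1}^m V_\Phi(y_i) \le mh\Bigr\}
\]
allows applying Markov's inequality to $e^{-s\tilde V_\Phi}$ at parameter $s>0$:
\[
\pr{V_\Phi(y_i) \le h,\; 1 \le i \le m} \le e^{smh}\,\EXP{\exp(-s\tilde V_\Phi)}.
\]
Now, because $\Phi$ is $\nu$-weakly sub-Poisson, Lemma~\ref{lem.voids_Laplace} applied to the response function $s\tilde\ell(\cdot)$ yields $\EXP{\exp(-s\tilde V_\Phi)} \le \exp[\lam\int(e^{-s\tilde\ell(x)}-1)\,\md x]$, giving (\ref{eqn:lower_bd_shot-noise2}).

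The only minor subtlety, and arguably the main (but still routine) thing to check, is that Lemmas~\ref{lem.moments_Laplace} and \ref{lem.voids_Laplace} are indeed applicable to the aggregated response function $s\tilde\ell$: non-negativity and measurability in the first coordinate hold by construction, and both sides of the target inequalities remain valid even when the right-hand side is infinite (the bound is trivial in that case). With these verifications, no further argument is needed; the proposition follows directly by assembling the two ingredients described above.
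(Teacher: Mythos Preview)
Your proposal is correct and follows essentially the same approach as the paper: the paper also reduces to the aggregated shot-noise $\sum_{i=1}^m V_\Phi(y_i)$ via the same inclusions, applies Chernoff's bound, and then invokes Lemmas~\ref{lem.moments_Laplace} and~\ref{lem.voids_Laplace}. The only cosmetic difference is that the paper phrases the reduction as ``prove the case $m=1$ first, then lift to general $m$,'' whereas you apply the lemmas directly to the aggregated response function $s\tilde\ell$; the content is identical.
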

\begin{proof}
Suppose that inequalities (\ref{eqn:upper_bd_shot-noise2}) and
(\ref{eqn:lower_bd_shot-noise2}) are true for $m =1$. Observe that
$\sum_{i=1}^mV_{\Phi}(y_i) = \sum_{X \in \Phi}\sum_{i=1}^m\ell(X,y_i)$
is itself a shot-noise field driven by the response function
$\sum_{i=1}^m\ell(\cdot,y_i).$ Thus if (\ref{eqn:upper_bd_shot-noise2})
and (\ref{eqn:lower_bd_shot-noise2}) are true for $m =1$, we can get
the general case from the following easy inequalities:  
\begin{eqnarray*}
\pr{V_{\Phi}(y_i) \geq h, 1 \leq i \leq m} & \leq & \pr{\sum_{i=1}^mV_{\Phi}(y_i) \geq mh} \\
\pr{V_{\Phi}(y_i) \leq h, 1 \leq i \leq m} & \leq & \pr{\sum_{i=1}^mV_{\Phi}(y_i) \leq mh}.
\end{eqnarray*}

Hence, we shall now prove only the case of $m = 1$ for both the inequalities. Setting $y_1 = y$ and using Chernoff's bound, we have that
\begin{eqnarray*}
\pr{V_{\Phi}(y) \geq h} & \leq & e^{-sh}\EXP{e^{sV_{\Phi}(y)}}, \\
\pr{V_{\Phi}(y) \leq h} & \leq & e^{sh}\EXP{e^{-sV_{\Phi}(y)}}.
\end{eqnarray*}
Using Lemmas~\ref{lem.moments_Laplace} and \ref{lem.voids_Laplace}, we can upper bound the RHS of the both the equations and thus we have shown the inequalities in the case $m = 1$ as required. 
\end{proof}
\begin{rem}
\label{rem:two_bounds}
Assuming in Proposition~\ref{prop:bounds_shot-noise} 
that $\Phi$ is stationary and weakly sub-Poisson, we compare $\Phi$ 
to the Poisson point process of the same intensity ($\lambda$).
We see from the proof that this assumption can be weakened in the
following way: 
\begin{enumerate}
\item If $\Phi$ is simple and its factorial moment measures 
$\alpha_\Phi^{(k)}$  can be bounded by those of a homogeneous Poisson point
process of some intensity $\lambda'>0$  then (\ref{eqn:upper_bd_shot-noise2})
holds true for any
  $y_1,\ldots,y_m \in \mR^d$ and $s,h > 0$ with $\lambda$ replace by
  $\lambda'$. 
\item If $\Phi$ is simple and its void probabilities 
$\nu(\cdot)$  can be bounded by those of a homogeneous Poisson point
process of some intensity $\lambda''>0$  then~(\ref{eqn:lower_bd_shot-noise2})
holds true for any $y_1,\ldots,y_m \in \mR^d$ and $s,h > 0$ with with $\lambda$ replace by  $\lambda''$. 
\end{enumerate}
The two bounds can be obtained by comparison to Poisson processes of
two different intensities, with $\lambda''\le \lambda'$.
\end{rem}

\subsection{Auxiliary discrete models}
\label{ss.AuxModels}
Though we focus on the percolation of Boolean models (continuum percolation models),
but as is the wont in the subject we shall extensively use discrete
percolation models as approximations.
For $r > 0, x \in \mR^d$, define the following subsets of $\mR^d$. Let
$Q_r :=  (-r,r]^d$ and $Q_r(x) := x + Q_r$.
We will consider the following discrete graph : 
$\mL^{*d}(r) = (r\mZ^d,\mE^{*d}(r))$ is a
close-packed graph on the scaled-up lattice $r\mZ^d$; the edge-set is
$\mE^{*d}(r) := \{ \langle z_i,z_j \rangle \in (r\mZ^d)^2 : Q_r(z_i) \cap
Q_r(z_j) \neq \emptyset \}$.

In what follows, we will define auxiliary site percolation models
on the above graph by randomly declaring some of its vertices (called also
sites) open.  As usual, we will say that a given discrete
site percolation model percolates if the corresponding sub-graph consisting of all open sites contains an infinite component. 

Define the corresponding lower and upper level sets of the
shot-noise field $V_{\Phi}(.,.)$ on  the lattice $r\mZ^d$ by
$\mZ^d_r(V_{\Phi},\leq h) := \{z \in r\mZ^d : V_{\Phi}(z) \leq h \}$
and $\mZ^d_r(V_{\Phi},\geq h) := \{z \in r\mZ^d : V_{\Phi}(z) \geq h
\}$. The percolation of these two discrete models (i.e,
$\mZ^d_r(V_{\Phi},\leq h)$ and $\mZ^d_r(V_{\Phi},\geq
h)$) understood in the sense of site-percolation of the close-packed
lattice $\mL^{*d}(r)$ (cf Section~\ref{ss.AuxModels}) will be of interest to us.

There are two standard arguments used in percolation theory to show
non-percolation and percolation. We shall describe them here below as
we use one or the other of these two arguments in our proofs for
Theorems \ref{thm:k_perc_sub-Poisson_pp}, \ref{thm:Peierls_arg_bd_pp}, \ref{thm:sinr_poisson_perc} and \ref{thm:perc_sinr_sub-Poisson}.

\begin{rem}[Standard argument for non-percolation]
\label{rem:non_perc_standard_arguments}
Since the number of paths on $\mL^{*d}(r)$ of length $n$ starting from the
origin is at most $(3^d-1)^n$, in order to show non-percolation of a
given model it is enough to show that the corresponding probability of having $n$ distinct sites 
simultaneously open is smaller than $\rho^{n}$ for some $0\le\rho<(3^d-1)^{-1}$ for $n$ large enough.
From this we can get that the expected number of open paths of length $n$ starting from the origin (which is at most
$(\rho(3^d-1))^n$) tends to $0$ and hence by Markov's inequality, we get that almost surely there is no infinite path i.e, no percolation.  
\end{rem}

\begin{rem}[Peierls' argument for percolation]
\label{rem:perc-standard-arguments}
Recall that the number of  contours surrounding
the origin  in $\mL^{*d}(r)$\footnote{A contour  surrounding
the origin in $\mL^{*d}(r)$ is a
minimal collection of vertices of $\mL^{*d}(r)$ such
that any infinite path on this graph
from the origin has to contain one of these
vertices.} is at most $n(3^d-2)^{n-1}$~\footnote{The bounds $n(3^d-2)^{n-1}$ and $(3^d-1)^n$ (in Remark \ref{rem:non_perc_standard_arguments}) are not tight;
  we use them for simplicity of exposition. For more about the former bound, refer \cite{Lebowitz98,Balister07}.}.
Hence, in order to prove percolation of a
given model using Peierls argument (cf.~\cite[ pp.~17--18]{Grimmett99}), it is enough to show that
the corresponding probability of having $n$ distinct sites simultaneously closed is
smaller than $\rho^{n}$ for some $0\le\rho<(3^d-2)^{-1}$ for $n$ large enough. Thus the expected
the number of closed contours around the origin (which is at most $\sum_{n \geq 1}n(3^d-2)^{n-1}\rho^{n}$) is finite and hence by a duality argument, we can infer that almost surely there will be at least one infinite path i.e, percolation.
\end{rem}

\subsection{$k$-percolation in Boolean model}
\label{sec:k_perc}
Recall that $k$-percolation has already been introduced rigorously in
the introduction itself; see Definition~\ref{d.k-percol}.
The aim of this section is to show that for weakly sub-Poisson point processes, the critical
intensity for $k$-percolation of the Boolean model is non-degenerate. 
\begin{thm}
\label{thm:k_perc_sub-Poisson_pp}
Let $\Phi$ be a simple, stationary point process of intensity~$\lambda$. 
For $k \geq 1$, there exist constants $c(\lam)$ and
$c(\lam,k)$ (not depending on the distribution of $\Phi$) such that
$0 < c(\lam) \leq r_c^1(\Phi)$ provided  $\Phi$ is $\alpha$-weakly
sub-Poisson and
$r_c^k(\Phi) \leq c(\lam,k) < \infty$ provided
$\Phi$ is $\nu$-weakly
sub-Poisson.  
\end{thm}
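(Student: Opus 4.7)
The proof discretizes $\mR^d$ and invokes the two standard percolation arguments from Remarks \ref{rem:non_perc_standard_arguments}--\ref{rem:perc-standard-arguments} separately for the two halves of the statement. The lower bound on $r_c^1$ uses the factorial-moment inequality of $\alpha$-weak sub-Poissonianity (applied to disjoint cubes), while the upper bound on $r_c^k$ invokes Proposition \ref{prop:bounds_shot-noise}(2), which rests on $\nu$-weak sub-Poissonianity.

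\textbf{Lower bound on $r_c^1$.} Partition $\mR^d$ into disjoint half-open cubes of side $a = c_0 r$ on the lattice $a\mZ^d$, and call a site \emph{open} iff its cube meets $\Phi$. If $C(\Phi, r)$ percolates then an infinite Gilbert chain of $\Phi$-points (consecutive distance at most $2r$) passes through open cubes whose lattice centres are at $L^\infty$-distance at most $2r + 2a$; hence open sites percolate in a bounded-degree graph on $a\mZ^d$ of degree $M = M(d)$. Using $\1[\Phi(Q) \geq 1] \leq \Phi(Q)$ together with $\alpha$-weak sub-Poissonianity on the \emph{disjoint} cubes $Q_1, \ldots, Q_n$,
\begin{equation*}
\pr{\Phi(Q_1) \geq 1, \ldots, \Phi(Q_n) \geq 1} \leq \EXP{\prod_{i=1}^n \Phi(Q_i)} \leq \prod_{i=1}^n \EXP{\Phi(Q_i)} = (\lam a^d)^n.
\end{equation*}
For $r$ small enough that $\lam a^d < 1/M$, Remark \ref{rem:non_perc_standard_arguments} forces a.s.\ absence of percolation, giving $r_c^1(\Phi) \geq c(\lam) > 0$.

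\textbf{Upper bound on $r_c^k$.} Fix $r_1 = c_1 r$ and $r'' := r - r_1\sqrt{d}$ with $c_1 = c_1(d)$ chosen in the non-empty interval $(2/(2\sqrt{d}+1),\, 1/\sqrt{d})$, so that $0 < r'' < r_1/2$ and hence the balls $B_{r''}(z)$, $z \in r_1 \mZ^d$, are pairwise disjoint. Declare site $z$ \emph{good} when $V_{\Phi, r''}(z) \geq k$. If $z$ is good then, by the triangle inequality, any $y \in Q_{r_1}(z)$ and $X \in \Phi$ with $|X - z| \leq r''$ satisfy $|X - y| \leq r'' + r_1\sqrt{d} = r$, so $Q_{r_1}(z) \subset C_k(\Phi, r)$. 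Connected components of good sites in $\mL^{*d}(r_1)$ therefore lie inside connected components of $C_k(\Phi, r)$, and it suffices to percolate the good sites. Via Remark \ref{rem:perc-standard-arguments} this reduces to bounding $\pr{\text{$n$ prescribed sites bad}}$ by some $\rho^n$ with $\rho < (3^d - 2)^{-1}$. Applying Proposition \ref{prop:bounds_shot-noise}(2) with $\ell(x, z) = \1[x \in B_{r''}(z)]$ and $h = k - 1$, and using the disjointness of the $B_{r''}(z_i)$ to evaluate $\int_{\mR^d}(e^{-s\sum_i\ell(x,z_i)}-1)\,dx = n\theta_d r''^d(e^{-s}-1)$, we obtain
\begin{equation*}
\pr{V_{\Phi, r''}(z_i) \leq k-1,\ 1 \leq i \leq n} \leq \exp\bigl\{n[s(k-1) + \lam \theta_d r''^d(e^{-s}-1)]\bigr\}.
\end{equation*}
Fixing any $s > 0$ and then choosing $r$ large enough that $\lam\theta_d r''^d (1 - e^{-s}) > s(k-1) + \log(3^d-2)$ produces the required $\rho$, yielding $r_c^k(\Phi) \leq c(\lam, k) < \infty$.

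\textbf{Main obstacle.} The subtle step is the joint parameter tuning in the upper bound. One needs $r_1$ small enough relative to $r$ to keep $r'' > 0$, yet large enough relative to $r''$ that the balls $B_{r''}(z_i)$ are pairwise disjoint, so that the Laplace integral in Proposition \ref{prop:bounds_shot-noise}(2) is genuinely linear in $n$ and the Peierls bound closes up. Once the ratio $r_1/r$ is fixed as a dimensional constant, the remaining inequality is driven by the $r''^d$-term and is easily achieved for large $r$ (and any fixed $k$); but extracting a sharp threshold would demand considerably more delicate geometric accounting of the overlap of the $B_{r''}(z_i)$.
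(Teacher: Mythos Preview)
Your proof is correct and follows the paper's overall two-part strategy (path counting for the lower bound, Peierls for the upper bound, both on a close-packed lattice), but the details differ in instructive ways.

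\textbf{Lower bound.} The paper routes the estimate through Proposition~\ref{prop:bounds_shot-noise}(1): it bounds $\pr{V^r_\Phi(z_i)\ge 1,\ i\le n}$ by the Chernoff-type expression $(\exp\{-(s+(1-e^s)\lam(2r)^d)\})^n$ and then tunes $s$ and $r$. You bypass the shot-noise machinery entirely and apply the factorial-moment inequality~(\ref{e.alpha-weakly-sub-Poisson}) directly to the disjoint cubes, via $\1[\Phi(Q_i)\ge 1]\le \Phi(Q_i)$, to get the clean bound $(\lam a^d)^n$. Your route is more elementary and shows that, for this half, one does not even need Lemma~\ref{lem.moments_Laplace}; the price is a slightly worse implicit constant because $M$ (the degree of your auxiliary graph) is larger than the $3^d-1$ of the paper's lattice.

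\textbf{Upper bound.} Both arguments invoke Proposition~\ref{prop:bounds_shot-noise}(2), but the local criterion differs. The paper declares a site good when its cube $Q_{r/(2\sqrt d)}(z)$ contains $\lceil k/2\rceil$ points, so that two neighbouring good cubes jointly supply $k$ points within a box of diameter $2r$; this gives a smaller constant $c(\lam,k)$ but the implication ``discrete percolation $\Rightarrow$ percolation of $C_k(\Phi,r)$'' is left somewhat implicit. You instead demand $k$ points in a ball $B_{r''}(z)$ and obtain the very transparent inclusion $Q_{r_1}(z)\subset C_k(\Phi,r)$ for each good $z$; the Peierls step then goes through because you chose $c_1$ so that the balls $B_{r''}(z_i)$ are pairwise disjoint, making the Laplace integral exactly linear in $n$. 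The paper achieves the same linearity automatically because the cubes $Q_{r/(2\sqrt d)}(z_i)$ already partition $\mR^d$.

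In short: same skeleton, different flesh. Your lower bound is a genuine simplification; your upper bound trades a weaker constant for a cleaner geometric verification.
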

More simply, Theorem~\ref{thm:k_perc_sub-Poisson_pp} 
gives an upper and lower bound for the critical radius of a stationary weakly sub-Poisson point
process dependent only on its intensity and not on the finer structure.
This is the content of Corollary \ref{cor:non_trivial_k_perc} stated in the
introduction. Recall that examples of such point processes have been already mentioned in Section \ref{ss:sim_ex}.

\begin{rem}[Clique percolation]
\label{rem:cech_complex}
 The \u{C}ech simplicial complex on a point process $\Phi$ is defined
 as the simplicial complex whose $k$-dimensional faces are subsets
 $\{X_0,\ldots,X_k\} \subset \Phi$ such that
 $\bigcap_{i=0}^kB_{X_i}(r) \neq \emptyset.$ Define a graph on the
 $k$-dimensional faces by placing edges between two $k$-dimensional
 faces if they are contained within the same $(k+1)$-dimensional
 face. The case $k=1$ corresponds to the random geometric graph or the
 Boolean model $C(\Phi,r)$. Is there percolation in this graph for any
 $k \geq 1$? This question was posed in \cite[Section 4]{Kahle11} for
 the case of the Poisson point process. The question was motivated by
 positive answer to the discrete analogue of this question for
 Erd\"{o}s-R\'{e}nyi random graphs in \cite{Bollobas09} where it was
 called as {\em clique percolation}. Once we observe that percolation
 of $k$-faces in the \u{C}ech simplicial complex is equivalent to
 $k$-percolation in the Boolean model,  Corollary \ref{cor:non_trivial_k_perc}  answers the question in affirmative not only for the Poisson point process    
but also for weakly sub-Poisson point processeses. 
\end{rem}

\begin{proof}[Proof of Theorem~\ref{thm:k_perc_sub-Poisson_pp}]
As explained before, we shall use the standard arguments as described in Remarks \ref{rem:non_perc_standard_arguments} and \ref{rem:perc-standard-arguments} to show the lower and upper bounds respectively. 

In order to prove the first statement, let $\Phi$ be $\alpha$-weakly sub-Poisson and $r > 0$. Consider the close packed lattice $\mL^{*d}(2r)$. Define the response function $l_r(x,y)
:= \1[x \in Q_r(y)]$ and the corresponding shot-noise field
$V^r_{\Phi}(z)$ on $\mL^{*d}(2r)$. Note that if $C(\Phi,r)$ percolates
then $\mZ^d_{2r}(V^r_{\Phi},\geq 1)$ percolates as well.
We shall now show that there exists a $r >0$ such that
$\mZ^d_{2r}(V^r_{\Phi},\geq 1)$ does not percolate.
For any $n$ and $z_i \in r\mZ^d, 1 \leq i \leq n$, $\sum_{i=1}^n l_r(x,z_i) =  1$ iff $x \in \bigcup_{i=1}^n Q_r(z_i)$ and else $0$. Thus, from Proposition \ref{prop:bounds_shot-noise}, we have that
\begin{eqnarray}
\pr{V^r_{\Phi}(z_i) \geq 1 , 1 \leq i \leq n} & \leq & e^{-sn}
\exp\left\{\lam \int_{\mR^d}(e^{s\sum_{i=1}^nl_r(x,z_i)}-1) \md
  x\right\}\, ,
  \non \\
& = & e^{-sn} \exp\left\{\lam \|\bigcup_{i=1}^n Q_r(z_i)\| (e^s-1)
\right\} , \non \\
\label{eqn:gen_bd_non_perc} & = & (\exp\{-(s+ (1-e^s)\lam (2r)^d) \})^n,
\end{eqnarray}
where $\|\cdot\|$ denote the $d$-dimensional Lebesgue's measure. 
Choosing $s$ large enough that $e^{-s} < (3^d -1)^{-1}$ and then by
continuity of $(s+ (1-e^s)\lam (2r)^d)$ in $r$, we can choose a
$c(\lam,s) > 0$ such that for all $r < c(\lam,s)$, $\exp\{-(s+ (1-e^s)\lam
(2r)^d))\} < (3^d -1)^{-1}$. Now, using the standard argument
involving the expected number of open paths (cf Remark~\ref{rem:non_perc_standard_arguments}),
we can show non-percolation of $\mZ^d_{2r}(V^r_{\Phi},\geq 1)$ for $r
< c(\lam) := \sup_{s > \log(3^d-1)}c(\lam,s)$. Hence for all $r < c(\lam)$,  $C(\Phi,r)$ does not
percolate and so $c(\lam) \leq r_c(\Phi)$.

For the second statement, let $\Phi$ be $\nu$-weakly sub-Poisson.
Consider the close packed lattice, $\mL^{*d}(\frac{r}{\sqrt{d}})$. Define the response function $l_r(x,y) := \1[x \in Q_{\frac{r}{2\sqrt{d}}}(y)]$ and the corresponding
additive shot-noise field $V^r_{\Phi}(z)$ on
$\mL^{*d}(\frac{r}{\sqrt{d}})$. Note that $C_k(\Phi,r)$ percolates if
$\mZ^d_{\frac{r}{\sqrt{d}}}(V^r_{\Phi},\geq \lceil k/2 \rceil)$
percolates,
where $\lceil a \rceil=\min\{z\in\mZ: z\ge a\}$.
We shall now show that there exists a $r < \infty$ such that $\mZ^d_{\frac{r}{\sqrt{d}}}(V^r_{\Phi},\geq \lceil k/2 \rceil)$ percolates.
For any $n$ and $z_i, 1 \leq i \leq n$, from Proposition~\ref{prop:bounds_shot-noise}, we have that
\begin{eqnarray}
\lefteqn{\pr{V^r_{\Phi}(z_i) \leq \lceil k/2 \rceil - 1 , 1 \leq i
    \leq n}}\non \\
 & \leq & e^{sn(\lceil k/2 \rceil - 1)} \exp\left\{\lam \int_{\mR^d}(e^{-s\sum_{i=1}^nl_r(x,z_i)}-1)\, \md x \right\}\ \non \\
& = & e^{sn(\lceil k/2 \rceil - 1)} \exp\left\{\lam \|\bigcup_{i=1}^n Q_{\frac{r}{2\sqrt{d}}}(z_i)\| (e^{-s}-1)  \right\} \non \\
\label{eqn:gen_bd_perc} & = & (\exp\{-((1-e^{-s})\lam (\frac{r}{\sqrt{d}})^d - s(\lceil k/2 \rceil - 1)) \})^n.
\end{eqnarray}
For any $s$, there exists $c(\lam,k,s) < \infty$ such that for all $r
> c(\lam,k,s)$, the last term in the above equation is strictly less
than $(3^d-1)^{-n}$. Thus one can use the standard argument involving the expected number of closed contours
around the origin (cf Remark~\ref{rem:perc-standard-arguments})
to show that $\mZ^d_{\frac{r}{\sqrt{d}}}(V^r_{\Phi},\geq \lceil k/2
\rceil )$ percolates. Further defining $c(\lam,k) := \inf_{s > 0} c(\lam,k,s)$, we have that $C_k(\Phi,r)$ percolates for all
$r > c(\lam,k)$. Thus $r^k_c(\Phi) \leq c(\lam,k)$.
\end{proof}
\begin{rem}
\label{rem:two_bounds_again}
Following Remark~\ref{rem:two_bounds}, 
bounds on the critical radii can be obtained 
by comparison to Poisson processes of
two different intensities:
$0 < c(\lam') \leq r_c^1(\Phi)$ provided $\Phi$ 
is simple and its factorial moment measures 
are bounded by those of a homogeneous Poisson point
process of intensity $\lambda'>0$ and 
$r_c^k(\Phi) \leq c(\lam'',k) < \infty$ provided the void probabilities
of $\Phi$ are bounded by those of the homogeneous Poisson point process of intensity 
$\lambda''>0$. 
\end{rem}

For $k=1$; i.e., for the usual percolation in Boolean model,
we can avoid the usage of exponential estimates of
Proposition~\ref{prop:bounds_shot-noise}  and work with void probabilities and factorial
moment measures only. The gain is improved bounds on the critical radius. 
\begin{thm}
\label{thm:Peierls_arg_bd_pp}
Let $\Phi$ be a stationary point process of intensity $\lambda$ and $\nu$-weakly
sub-Poisson. Then $r_c(\Phi) \leq \tilde{c}(\lam) := \sqrt{d}\left(\frac{\log (3^d-2)}{\lam}\right)^{1/d} \leq c(\lam,1) < \infty$.
\end{thm}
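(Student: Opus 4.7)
The plan is to apply the Peierls contour argument (Remark~\ref{rem:perc-standard-arguments}) to a suitable site-percolation discretization of the Boolean model, but, crucially, to bypass the Chernoff-type exponential bound of Proposition~\ref{prop:bounds_shot-noise} by applying the defining void inequality~\eqref{e.nu-weakly-sub-Poisson} of $\nu$-weak sub-Poissonianity directly to a disjoint union of boxes. For $k=1$ the event ``the site $z$ is closed'' is a pure void event on a \emph{single} Borel set, so this direct application of~\eqref{e.nu-weakly-sub-Poisson} matches the Poisson void bound exactly and introduces no exponential slack; that is the source of the improvement $\tilde c(\lambda)\le c(\lambda,1)$.

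Concretely, set $s:=r/\sqrt{d}$ and consider the partition of $\mR^d$ into the disjoint boxes $\{Q_{s/2}(z)\}_{z\in s\mZ^d}$, each of side $s$ and volume $s^d=r^d/d^{d/2}$. On the close-packed lattice $\mL^{*d}(s)$, call a site $z$ \emph{open} iff $\Phi(Q_{s/2}(z))\ge 1$, and \emph{closed} otherwise. For any two close-packed neighbors $z_1,z_2$ the union $Q_{s/2}(z_1)\cup Q_{s/2}(z_2)$ lies in a cube of side $2s$ and hence has Euclidean diameter at most $2s\sqrt{d}=2r$; thus any $x_1\in\Phi\cap Q_{s/2}(z_1)$ and $x_2\in\Phi\cap Q_{s/2}(z_2)$ satisfy $|x_1-x_2|\le 2r$ and $B_r(x_1)\cap B_r(x_2)\ne\emptyset$. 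An infinite connected set of open sites in $\mL^{*d}(s)$ therefore produces an infinite connected component of $C(\Phi,r)$, so it suffices to prove site percolation of the open sites.

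For the Peierls bound, take $n$ distinct sites $z_1,\dots,z_n\in s\mZ^d$. Disjointness of the partition boxes lets us identify the event that all $n$ sites are closed with $\{\Phi(\bigcup_{i=1}^n Q_{s/2}(z_i))=0\}$, and by $\nu$-weak sub-Poissonianity this probability is at most $\exp\{-\lambda n s^d\}=\exp\{-\lambda n r^d/d^{d/2}\}$. For $r>\tilde c(\lambda)=\sqrt{d}(\log(3^d-2)/\lambda)^{1/d}$ this is strictly smaller than $(3^d-2)^{-n}$, so Remark~\ref{rem:perc-standard-arguments} yields percolation of the open sites, and therefore of $C(\Phi,r)$, giving $r_c(\Phi)\le\tilde c(\lambda)$. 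The second inequality $\tilde c(\lambda)\le c(\lambda,1)$ is a direct comparison with the proof of Theorem~\ref{thm:k_perc_sub-Poisson_pp}: for $k=1$ the exponent in~\eqref{eqn:gen_bd_perc} reads $(1-e^{-s'})\lambda s^d$, which is strictly less than $\lambda s^d$ for every finite $s'>0$, so each Chernoff threshold $c(\lambda,1,s')$ exceeds $\tilde c(\lambda)$ and the infimum $c(\lambda,1)=\inf_{s'>0}c(\lambda,1,s')$ satisfies $c(\lambda,1)\ge\tilde c(\lambda)$, which in particular is finite.

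There is no genuine obstacle here; the only subtlety worth flagging is the need to use a \emph{partitioning} lattice, so that~\eqref{e.nu-weakly-sub-Poisson}, whose statement concerns a single Borel set, can be transferred to the union of the $n$ boxes with no loss from possible positive correlations. The choice $s=r/\sqrt{d}$ is then forced: it is the largest spacing for which the box diameters still deliver Boolean connectivity ($2s\sqrt{d}\le 2r$) between close-packed neighbors, which is precisely what minimizes the resulting threshold $\tilde c(\lambda)$.
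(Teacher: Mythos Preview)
Your proof is correct and follows essentially the same route as the paper's. Both arguments use the close-packed lattice with spacing $r/\sqrt d$, declare a site open when $\Phi$ hits the corresponding box, apply~\eqref{e.nu-weakly-sub-Poisson} directly to the disjoint union of boxes to get the bound $(\exp\{-\lambda(r/\sqrt d)^d\})^n$, and then invoke Peierls; the paper merely packages the openness event as the level set of an extremal shot-noise $U^r_\Phi(z)=\sup_{X\in\Phi}\1[z\in Q_{r/(2\sqrt d)}(X)]$, which is the same condition you wrote. Your justification of $\tilde c(\lambda)\le c(\lambda,1)$ via $(1-e^{-s'})<1$ in the $k=1$ exponent of~\eqref{eqn:gen_bd_perc} is exactly the paper's.
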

\begin{proof}
As in the second part of the proof of Theorem
\ref{thm:k_perc_sub-Poisson_pp}, consider the close packed lattice
$\mL^{*d}(\frac{r}{\sqrt{d}})$. Define the response function $l_r(x,y)
:= \1[x \in Q_{\frac{r}{2\sqrt{d}}}(y)]$ and the corresponding
extremal shot-noise field $U^r_{\Phi}(z):=\sup_{X\in\Phi}l_r(z,X)$ on
$\mL^{*d}(\frac{r}{\sqrt{d}})$. Now,
note that $C(\Phi,r)$ percolates if
$\{z:U^r_{\Phi}(z) \geq 1\}$ percolates on $\mL^{*d}(\frac{r}{\sqrt{d}})$. We shall
now show that this holds true for $r > \tilde{c}(\lam)$.
Using the ordering of void probabilities we have
\begin{eqnarray}
\pr{U^r_{\Phi}(z_i) = 0 , 1 \leq i \leq n} & = & \pr{\Phi \cap \bigcup_{i=1}^n Q_{\frac{r}{2\sqrt{d}}}(z_i) = \emptyset} \non \\
& \leq & \pr{\Phi_{\lam} \cap \bigcup_{i=1}^n Q_{\frac{r}{2\sqrt{d}}}(z_i) = \emptyset } \non \\
\label{eqn:Peierl_bd_perc} & = & \left(\exp\left\{-\lam
    (\frac{r}{\sqrt{d}})^d \right\}\right)^n\,.
\end{eqnarray}
Clearly, for $r > \tilde{c}(\lam),$ the exponential term above is
less than $(3^d-2)^{-1}$ and thus   $\{z:U^r_{\Phi}(z) \geq 1\}$
percolates by Peierls argument (cf
Remark~\ref{rem:perc-standard-arguments}).
 It is easy to see that for any $s >
0$,  $\exp\{-\lam (\frac{r}{\sqrt{d}})^d\} \leq \exp\{-(1-e^{-s})\lam
(\frac{r}{\sqrt{d}})^d\}$ and hence $\tilde{c}(\lam) \leq c(\lam,1)$.
\end{proof}

Combining the results of Theorem~\ref{thm:Peierls_arg_bd_pp} and \cite[Proposition 6.1]{dcx-clust}, we have the following phase-transition result for usual continuum percolation.
\begin{cor}
\label{cor:phase-transition}
For a stationary weakly sub-Poisson point process $\Phi$, we have that
$0< \frac{1}{(\theta_d)^{1/d}}\le r_c(\Phi)\le
\sqrt{d}\left(\frac{\log (3^d-2)}{
\lambda}\right)^{1/d}<\infty$
\end{cor}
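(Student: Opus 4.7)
The proof is essentially an assembly of two results already on the table. For the upper bound, I would simply invoke Theorem~\ref{thm:Peierls_arg_bd_pp} directly: any stationary weakly sub-Poisson point process of intensity $\lambda$ is, by Definition~\ref{defn:weak_subP}, in particular $\nu$-weakly sub-Poisson, so the hypotheses of Theorem~\ref{thm:Peierls_arg_bd_pp} are met and we get $r_c(\Phi)\le \sqrt{d}(\log(3^d-2)/\lambda)^{1/d}$. Strict finiteness of this bound requires only $3^d-2>1$, which is the case for $d\ge 2$ (the only regime of interest, since continuum percolation in $d=1$ is degenerate); and positivity of $\lambda$ rules out the value $+\infty$.

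For the lower bound, I would quote Proposition~6.1 of \cite{dcx-clust} in the form already recalled in the discussion preceding Section~\ref{sec:sub-poisson}: the non-standard lower critical radius $\underline{r}_c(\Phi_\lambda)$ of the Poisson process, defined so that below it the expected number of long occupied paths from the origin in the relevant discrete approximation decays geometrically, satisfies $\underline{r}_c(\Phi_\lambda)\ge(\theta_d)^{-1/d}$. The key point is that, for a weakly sub-Poisson $\Phi$, the very path-counting estimate used to define $\underline{r}_c$ depends on $\Phi$ only through its factorial moment measures (and, in a dual form, its void probabilities), so the comparison inequalities of Lemmas~\ref{lem.moments_Laplace}--\ref{lem.voids_Laplace} (packaged in Proposition~\ref{prop:bounds_shot-noise}) transfer the bound verbatim to $\Phi$: $\underline{r}_c(\Phi)\ge \underline{r}_c(\Phi_\lambda)\ge(\theta_d)^{-1/d}$. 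Since the usual critical radius dominates $\underline{r}_c$ by construction, we obtain $r_c(\Phi)\ge(\theta_d)^{-1/d}>0$.

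There is no real obstacle here; the corollary is just the juxtaposition of the two bounds, and the only subtlety worth flagging is that the transfer of Proposition~6.1 of \cite{dcx-clust} from the Poisson case to the weakly sub-Poisson case uses the same Laplace-transform/moment-measure comparison that drives all the results of this section, so it comes for free.
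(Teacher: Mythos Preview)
Your proposal is correct and mirrors the paper's own justification: the corollary is presented immediately after Theorem~\ref{thm:Peierls_arg_bd_pp} with the one-line remark that it follows by combining that theorem with \cite[Proposition~6.1]{dcx-clust}. Your unpacking of the lower bound via the chain $\underline r_c(\Phi_\lambda)\le\underline r_c(\Phi)\le r_c(\Phi)$ is precisely the mechanism the paper sketches in the introduction, so there is nothing to add.
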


\subsection{Percolation in SINR graphs}
\label{sec:perc_SINR}
Study of percolation in the Boolean model $C(\Phi,r)$ was proposed in~\cite{Gilbert61}
to address the feasibility of multi-hop communications in large
``ad-hoc'' networks, where full connectivity is typically hard to maintain. 
The Signal-to-interference-and-noise ratio (SINR)
model~(see~\cite{sinr_coverage,Dousse_etal_TON,Dousse_etal06}~\footnote{The name 
{\em shot-noise germ-grain process} was also suggested by D.~Stoyan in 
his private communication to BB.})
 is more adequate than the Boolean model
in the context of  wireless communication networks as it  allows
one to take into account the {\em interference} intrinsically related to wireless communications.
For more motivation to study SINR model, refer \cite{subpoisson} and the references therein.

We begin with a formal introduction of the SINR graph model.
In this subsection, we shall work only in $\mR^2$. The parameters of the model are non-negative numbers $P$(signal power), $N$(environmental noise), $\gamma$, $T$(SINR threshold) and an attenuation function $\ell :\mR^2_+ \to \mR_+$  satisfying the following assumptions:
$\ell(x,y)  =  l(|x-y|)$ for some continuous function $l : \mR_+ \to
\mR_+$, strictly decreasing on its support, with  $l(0) \geq TN/P$, $l(\cdot) \leq  1$,
and  $\int_0^{\infty} x l(x) \md x  <  \infty$.
These are exactly the assumptions made in~\cite{Dousse_etal06} and we
refer to this paper for a discussion on their validity.

Given a point process $\Phi$, the {\em interference} generated due to
the point process at a location $x$ is defined as the following
shot-noise field
\label{eqn:interference}
$I_{\Phi}(x) := \sum_{X \in \Phi\setminus\{x\}} l(|X-x|)$.
Define the SINR value as follows :
\begin{equation}
\label{eqn:sinr_defn}
\text{SINR}(x,y,\Phi,\gamma) :=  \frac{Pl(|x-y|)}{N + \gamma P I_{\Phi\setminus\{x\}}(y)}.
\end{equation}

Let $\Phi_B$ and $\Phi_I$ be two point processes. Let $P,N,T > 0$ and $\gamma \geq 0$. The SINR graph is defined as $G(\Phi_B,\Phi_I,\gamma) := (\Phi_B,E(\Phi_B,\Phi_I,\gamma))$ where 
$$E(\Phi_B,\Phi_I,\gamma)\!:=\! \{ \la X,Y \ra\!\in\!\Phi_B^2: \text{SINR}(Y,X,\Phi_I,\gamma)\!>\!T,\, \text{SINR}(X,Y,\Phi_I,\gamma) \!> \!T \}.$$
The SNR graph (i.e, the graph without interference, $\gamma = 0$) is
defined as $G(\Phi_B) := (\Phi_B,E(\Phi_B))$ where 
$$E(\Phi_B) := \{ \la X,Y \ra~\in~\Phi_B^2 : \text{SINR}(X,Y,\Phi_B,0) > T \}.$$

Observe that the SNR graph $G(\Phi)$ is same as the graph $C(\Phi,r_l)$ with $2r_l = l^{-1}(\frac{TN}{P})$. Also, when $\Phi_I = \emptyset$, we shall omit it from the parameters of the SINR graph. Recall that percolation in the above graphs is existence of an infinite connected component in the graph-theoretic sense.

\subsubsection{Poissonian back-bone nodes}
\label{sec:poisson_sinr}

Firstly, we consider the case when the backbone nodes ($\Phi_B$) form
a Poisson point process and in the next section, we shall relax this
assumption. When $\Phi_B = \Phi_{\lam}$, the Poisson point process of intensity
$\lambda$, we shall use $G(\lam,\Phi_I,\gamma)$ and $G(\lam)$ to
denote the SINR and SNR graphs respectively. Denote by
$\lam_c(r):=\lambda (r_c(\Phi_\lambda)/r)^2$
the {\em critical intensity} for percolation of the Boolean model $C(\Phi_\lam,r)$. The
following result guarantees the existence of a $\gamma > 0$ such that
for any sub-Poisson point process $\Phi = \Phi_I$, $G(\lam,\Phi,\gamma)$ will
percolate provided $G(\lam)$ percolates i.e, the SINR graph percolates
for small interference values when the corresponding SNR graph
percolates.
\begin{thm}
\label{thm:sinr_poisson_perc}
Let $\lam > \lam_c(r_l)$ and $\Phi$ be an $\alpha-$ weakly sub-Poisson with mean measure $\mu\|.\|$ for some $\mu > 0$. Then there exists a $\gamma > 0$ such that $G(\lam,\Phi,\gamma)$ percolates.
\end{thm}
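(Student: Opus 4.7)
The plan is to adapt the renormalization argument for Poisson SINR percolation to the sub-Poisson interferer setting, using Proposition~\ref{prop:bounds_shot-noise}(i) as a drop-in replacement for the explicit Poisson Laplace transform. Since $\lam_c(r)=\lam_c(1)r^{-2}$ is continuous, we fix $r'\in(0,r_l)$ with $\lam>\lam_c(r')$, so that $C(\Phi_\lam,r')$ still percolates. A direct inspection of the SINR inequality shows that an edge $\la x,y\ra\in\Phi_\lam^2$ with $|x-y|\le 2r'$ lies in $G(\lam,\Phi,\gm)$ as soon as $I_\Phi(x),\,I_\Phi(y)\le M(\gm):=(Pl(2r')/T-N)/(\gm P)$, and $M(\gm)\uparrow\infty$ as $\gm\downarrow 0$. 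It therefore suffices to show that for $\gm$ small enough, the sub-graph of $C(\Phi_\lam,r')$ whose vertices see interference at most $M(\gm)$ still percolates.

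I work on the close-packed lattice $\mL^{*2}(r'/\sqrt 2)$ and define two site-percolation models. A site $z$ is A-open if the standard renormalization event on $Q_{r'/\sqrt 2}(z)$ (a crossing of the cell and its neighbours by $C(\Phi_\lam,r')$) holds; since $\lam>\lam_c(r')$, a coarse-enough grouping of these events yields a $1$-dependent field of marginal probability arbitrarily close to $1$, and hence dominates a supercritical Bernoulli site-percolation, so $\pr{z_1,\ldots,z_n\text{ all A-closed}}\le\rho_A^n$ with $\rho_A$ as small as desired. A site $z$ is B-open if $\sup_{y\in Q_{r'/\sqrt 2}(z)}I_\Phi(y)\le M(\gm)$. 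Because $l$ is non-increasing on its support with $l\le 1$, this supremum is dominated by the shot-noise $\tilde V_\Phi(z):=\sum_{X\in\Phi}\tilde l(|X-z|)$ built from the enlarged response $\tilde l(t):=l((t-r')_+)$, which still satisfies $\tilde l\le 1$ and $\int_0^\infty t\tilde l(t)\md t<\infty$.

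The key step is then to apply Proposition~\ref{prop:bounds_shot-noise}(i) to $\tilde V_\Phi$: for any sites $z_1,\ldots,z_n$ and any $s>0$,
$$
\pr{z_i\text{ all B-closed}}\;\le\;e^{-snM(\gm)}\exp\!\Bigl\{\m\!\int_{\mR^2}\!\bigl(e^{s\sum_{i=1}^n\tilde l(|x-z_i|)}-1\bigr)\md x\Bigr\}.
$$
The uniform lattice-sum bound $B:=\sup_x\sum_{z\in(r'/\sqrt 2)\mZ^2}\tilde l(|x-z|)$ is finite thanks to $\int t\tilde l(t)\md t<\infty$, and the convexity estimate $e^{sv}-1\le(e^{sB}-1)v/B$ on $[0,B]$ reduces the integral to at most $n(e^{sB}-1)\m B^{-1}\int\tilde l(|x|)\md x$. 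Fixing $s$ and taking $\gm$ small makes $M(\gm)$ large and hence the whole bound at most $\rho_B^n$ with $\rho_B$ arbitrarily small. Since $\Phi_\lam$ and $\Phi$ are independent, a union bound over the $2^n$ ways of classifying each $z_i$ as A-closed or B-closed gives $\pr{z_1,\ldots,z_n\text{ all closed}}\le(\rho_A+\rho_B)^n$. Choosing $\gm$ so that $\rho_A+\rho_B<(3^2-2)^{-1}$ and applying the Peierls argument of Remark~\ref{rem:perc-standard-arguments} produces a percolating cluster of sites that are simultaneously A- and B-open; such a cluster carries an infinite component of $G(\lam,\Phi,\gm)$.

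The principal obstacle is the simultaneous joint control of the interference over $n$ arbitrary cells: the ``supremum within a cell'' is absorbed into the enlarged response $\tilde l$, while the ``joint $n$-point'' control is exactly what Proposition~\ref{prop:bounds_shot-noise}(i) was designed to deliver under $\alpha$-weak sub-Poissonianity alone, without any independence or explicit Laplace formula.
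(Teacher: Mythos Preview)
Your overall strategy coincides with the paper's: follow the renormalization of~\cite{Dousse_etal06}, define A-open (good crossings for the supercritical Boolean model $C(\Phi_\lam,r')$) and B-open (bounded interference) sites on a close-packed lattice, and use Proposition~\ref{prop:bounds_shot-noise}(1) in place of the explicit Poisson Laplace computation to obtain exponential decay of the joint B-closed probability. The paper gives only a sketch and defers the details to~\cite{Dousse_etal06} and~\cite[Section~6.3.4]{Yogesh_thesis}; your handling of the B-part (enlarging $l$ to $\tilde l$ to absorb the supremum over a cell, then the convexity bound with the finite lattice sum $B$) is precisely the argument of~\cite[Prop.~2]{Dousse_etal06}.

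There is, however, a genuine gap in your combination step. You write ``since $\Phi_\lam$ and $\Phi$ are independent'' and factor to get $(\rho_A+\rho_B)^n$, but the theorem does \emph{not} assume independence of the interferers and the backbone; the paper emphasises this immediately after the statement, noting in particular that $\Phi$ may equal $\Phi_\lam\cup\Phi_0$. Without independence, the factorisation $\pr{(\bigcap_{i\in S}A_i^c)\cap(\bigcap_{j\notin S}B_j^c)}\le\rho_A^{|S|}\rho_B^{n-|S|}$ is unjustified, and the bound $(\rho_A+\rho_B)^n$ does not follow. The repair is standard and needs no new idea: bound each of the $2^n$ summands by $\min(\rho_A^{|S|},\rho_B^{n-|S|})\le\rho_A^{|S|/2}\rho_B^{(n-|S|)/2}$, which gives $\pr{z_1,\ldots,z_n\text{ all closed}}\le(\sqrt{\rho_A}+\sqrt{\rho_B})^n$; equivalently, observe by pigeonhole that at least $\lceil n/2\rceil$ of the $z_i$ are A-closed or at least $\lceil n/2\rceil$ are B-closed, yielding a bound of order $2^n\max(\rho_A,\rho_B)^{n/2}$. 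Either bound suffices for the Peierls argument of Remark~\ref{rem:perc-standard-arguments} once the renormalization scale (controlling $\rho_A$) and $\gm$ (controlling $\rho_B$) are chosen appropriately.
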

Note that we have not assumed the independence of $\Phi$ and
$\Phi_{\lam}$. In particular, $\Phi$ could be $\Phi_{\lam} \cup
\Phi_0$ where $\Phi_0$ is an independent $\alpha-$ weakly sub-Poisson point process. The case
$\Phi_0 = \emptyset$ was proved in \cite{Dousse_etal06}. Our proof
follows their idea of coupling the continuum model with a discrete
model and then using the Peierls argument (see Remark \ref{rem:perc-standard-arguments}). As in
\cite{Dousse_etal06}, it is clear that for  $N \equiv 0$, the
above result holds with $\lam_c(r_l) = 0$.
\begin{proof}[Sketch of the proof of Theorem~\ref{thm:sinr_poisson_perc}]
Our proof follows the arguments  given in \cite{Dousse_etal06} and here,
we will only give a sketch of the proof. The details can be found
in~\cite[Section~6.3.4]{Yogesh_thesis}.

Assuming $\lambda>\lambda_c(\rho_l)$, one observes first
that the graph $G(\lambda)$ also percolates
with any slightly larger constant noise
$N'=N+\delta'$, for some $\delta'>0$.
Essential to the proof of the result
is to show that the
level-set $\{x:I_{\Phi_I}(x)\le M\}$ of the interference
field  percolates (contains an infinite connected component) for sufficiently
large $M$. Suppose that it is true.
Then taking $\gamma=\delta'/M$ one has
percolation of the level-set $\{y:\gamma I_{\Phi_I}(y)\le \delta'\}$.
The main difficulty consists in showing that
$G(\lambda)$ with noise $N'=N+\delta'$
percolates {\em within} an infinite connected component of
$\{y:I_{\Phi_I}(y)\le \delta'\}$. This was done in~\cite{Dousse_etal06},
by mapping both models $G(\lambda)$ and the level-set of the
interference field to a discrete lattice
and showing that both discrete approximations not only percolate
but actually satisfy a stronger condition, related to the Peierls argument.
We follow exactly the same steps and the only fact that we have to
prove,  regarding  the interference, is that there exists a constant
$\epsilon<1$ such that for arbitrary $n\ge1$ and
arbitrary choice of locations $x_1,\ldots,x_n$ one has
$\pr{I_{\Phi_I}(x_i)> M, \, i=1,\ldots, n}\le \epsilon^n$.
In this regard, we use the first statement of
Proposition~\ref{prop:bounds_shot-noise} 
to prove, exactly as in~\cite[Prop. 2]{Dousse_etal06}, that for sufficiently small $s$
it is not larger than $K^n$ for some
constant $K$  which depends on $\lam$ but not on $M$.
This completes the proof.
\end{proof}

\subsubsection{Non-Poissonian back-bone nodes}
\label{sec:non_poisson_sinr}

We shall now consider the case when the backbone nodes are formed by a
$\gamma$-weakly sub-Poisson point process. In this case, we can give a weaker result, namely that
with an increased signal power (i.e, possibly much greater than the critical
power), the SINR graph will percolate for small interference
parameter $\gamma > 0$.
\begin{thm}
\label{thm:perc_sinr_sub-Poisson}
Let $\Phi$ be a stationary, $\nu$-weakly sub-Poisson point process and $\Phi_I$ be an $\alpha-$ weakly sub-Poisson point process with intensity $\mu$ for some $\mu > 0.$ Also assume that $l(x)
> 0$ for all $x \in \mR_+$. Then there exist $P, \gamma > 0$ such that $G(\Phi,\Phi_I,\gamma)$ percolates.
\end{thm}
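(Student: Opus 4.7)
The plan is to adapt the discrete-lattice Peierls argument used in the sketch of Theorem~\ref{thm:sinr_poisson_perc}, with two modifications necessitated by the non-Poissonian backbone $\Phi$. First, we cannot cleanly separate ``the SNR graph percolates'' from ``the low-interference region percolates'' as was done in the Poisson setting (where the former rested on classical continuum percolation results for $\Phi_\lambda$); I would instead declare a lattice site to be \emph{good} precisely when both a backbone node is present nearby \emph{and} the local interference is bounded, and run Peierls' argument directly for this combined event. Second, in place of Poisson independence I would use the $\nu$-weak sub-Poisson void-probability bound from the proof of Theorem~\ref{thm:Peierls_arg_bd_pp} to control the backbone, and Proposition~\ref{prop:bounds_shot-noise}(1) to control the interference.

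Concretely, I would work on the close-packed lattice $\mL^{*2}(r)$ and call a site $z\in r\mZ^2$ good if (a) $\Phi\cap Q_{r/2}(z)\neq\emptyset$ and (b) $\sup_{y\in Q_{r/2}(z)} I_{\Phi_I}(y)\leq M$, with $r,M>0$ to be tuned later. For adjacent good sites $z,z'$, any two points of $\Phi$ lying in $Q_{r/2}(z)$ and $Q_{r/2}(z')$ respectively are at Euclidean distance at most some $d_r=O(r)$; since $l(d_r)>0$ by hypothesis, one can then choose $P$ large and $\gamma$ small so that $Pl(d_r)>T(N+\gamma PM)$, which forces the SINR condition across such pairs. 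Hence site-percolation of the good sites in $\mL^{*2}(r)$ implies percolation of $G(\Phi,\Phi_I,\gamma)$. For $n$ distinct sites $z_1,\ldots,z_n$, set $A_i^{(1)}:=\{\Phi\cap Q_{r/2}(z_i)=\emptyset\}$ and $A_i^{(2)}:=\{\sup_{y\in Q_{r/2}(z_i)} I_{\Phi_I}(y)>M\}$. Since the boxes $Q_{r/2}(z_i)$ are disjoint, the proof of Theorem~\ref{thm:Peierls_arg_bd_pp} yields $\pr{\bigcap_i A_i^{(1)}}\leq e^{-\lambda n r^2}=:\epsilon_1^n$. For $A_i^{(2)}$, I would dominate the supremum pointwise by the shot-noise with enlarged response $\bar{\ell}(x,z):=\sup_{y\in Q_{r/2}(z)} l(|x-y|)$ (still integrable in $x$ because $l$ is) and apply Proposition~\ref{prop:bounds_shot-noise}(1) to this shot-noise, obtaining, exactly as in the interference bound sketched for the Poisson case of Theorem~\ref{thm:sinr_poisson_perc}, $\pr{\bigcap_i A_i^{(2)}}\leq \epsilon_2^n$ for some $\epsilon_2$ that can be made arbitrarily small by taking $M$ large (after fixing a small optimization parameter $s>0$).

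The combined bad event at $z_i$ is $A_i^{(1)}\cup A_i^{(2)}$; decomposing over the $2^n$ choices of which kind of bad event occurs at each site and using $\pr{A\cap B}\leq\sqrt{\pr{A}\pr{B}}$ inside each term gives
\[
\pr{\bigcap_i(A_i^{(1)}\cup A_i^{(2)})}\leq \sum_{I\subset\{1,\ldots,n\}}\sqrt{\pr{\bigcap_{i\in I}A_i^{(1)}}\,\pr{\bigcap_{i\notin I}A_i^{(2)}}}\leq \bigl(\sqrt{\epsilon_1}+\sqrt{\epsilon_2}\bigr)^n,
\]
which crucially uses only the one-sided estimates above and so bypasses any need for independence of $\Phi$ and $\Phi_I$. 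Tuning $r$ small (to shrink $\epsilon_1$) and then $M$ large (to shrink $\epsilon_2$), we can force $\sqrt{\epsilon_1}+\sqrt{\epsilon_2}<(3^2-2)^{-1}=1/7$, and Peierls' argument (Remark~\ref{rem:perc-standard-arguments}) then gives percolation of the good sites, and hence of $G(\Phi,\Phi_I,\gamma)$. I expect the main obstacle to lie in this final consistency step: the parameters must be chosen in the right order so that $r$ is small enough to make $\epsilon_1$ tiny, $M$ is large enough (with the optimizing $s$ of Proposition~\ref{prop:bounds_shot-noise}(1)) to make $\epsilon_2$ tiny, \emph{and with the same $r,M$} one can still find $P$ large and $\gamma>0$ satisfying $Pl(d_r)>T(N+\gamma PM)$; the hypothesis $l>0$ everywhere is precisely what makes this consistency possible.
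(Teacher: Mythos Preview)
Your approach is essentially the paper's: both use the void-probability bound from Theorem~\ref{thm:Peierls_arg_bd_pp} for the backbone and Proposition~\ref{prop:bounds_shot-noise}(1) for the interference, then combine them in a Peierls argument on the close-packed lattice. Your explicit Cauchy--Schwarz combination $\pr{\bigcap_i(A_i^{(1)}\cup A_i^{(2)})}\le(\sqrt{\epsilon_1}+\sqrt{\epsilon_2})^n$ is a clean way to merge the two exponential bounds without any independence assumption between $\Phi$ and $\Phi_I$; the paper's sketch defers this step to~\cite{Yogesh_thesis} and~\cite{Dousse_etal06}, but the underlying idea is the same.

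There is, however, a sign error in your tuning of $r$. You write ``tuning $r$ small (to shrink $\epsilon_1$)'', but with $\epsilon_1=e^{-\lambda r^2}$ you need $r$ \emph{large} to make $\epsilon_1$ small. This is not merely cosmetic: it is exactly why the hypothesis $l(x)>0$ for all $x$ is required, as you yourself observe in your last sentence. With $r$ large, $d_r$ is large, $l(d_r)$ is small but strictly positive, and only then do you genuinely need to inflate $P$ to achieve $Pl(d_r)>T(N+\gamma PM)$. With $r$ small the SINR condition would be trivial and the unbounded-support assumption would play no role --- so your concluding remark shows you had the right picture even if the stated direction for $r$ is reversed. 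The correct order is: choose $r$ large so that $\sqrt{\epsilon_1}<1/14$; with this $r$ fixed, the constant $K$ in the interference bound (depending on $r$ through $\bar\ell$) is fixed, so choose $M$ large to force $\sqrt{\epsilon_2}<1/14$; finally choose $P$ large so that $Pl(d_r)>TN$ and then $\gamma>0$ small enough.
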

As in Theorem \ref{thm:sinr_poisson_perc}, we have not assumed the independence of $\Phi_I$ and $\Phi$. For example, $\Phi_I = \Phi \cup \Phi_0$ where $\Phi$ and $\Phi_0$ are independent $\nu$-weakly and $\alpha-$ weakly sub-Poisson point processes respectively. Let us also justify the assumption of unbounded support for $l(\cdot)$. Suppose that $r = \sup \{x : l(x) > 0 \} < \infty$. Then if $C(\Phi,r)$ is sub-critical, $G(\Phi,\Phi_I,\gamma)$ will be sub-critical for any $\Phi_I,P,\gamma$.
\begin{proof}[Sketch of the proof of Theorem~\ref{thm:perc_sinr_sub-Poisson}]
In this scenario, increased power is equivalent to increased radius in
the Boolean model corresponding to SNR model. From this
observation, it follows from Theorem~\ref{thm:Peierls_arg_bd_pp} that with
possibly increased power the associated SNR model percolates. Then, we
use the approach from the proof of
Theorem~\ref{thm:sinr_poisson_perc} to obtain
a $\gamma > 0$ such that the SINR network percolates as well.
The details can be found in \cite[Section~6.3.4]{Yogesh_thesis}.
\end{proof}
For further  discussion on $dcx$ ordering in the context of
communication networks see~\cite{subpoisson}.

\remove{
On the other hand, in Section~\ref{ss.super-percol}, 
We will give 
an example of a Poisson-Poisson cluster point process 
(which is known to be $dcx$ larger than the Poisson point process) for which
$r_c = 0$. This invalidates the temptation to conjecture the monotonicity of 
$r_c$ with respect to the $dcx$ order of point process, in full generality.

\subsection{Numerical comparison of percolation for
perturbed lattices}
\label{sec:pl}\label{ss.NumerialPercol}

Let $\mathbb{H} := \{H_z\}_{z \in I}$ be the tiling or tessellation of the $\mR^2$ with regular hexagons of unit area where $I \subset \mR^2$ is a countable index set denoting the center of the hexagons in the tiling. Without loss of generality, we assume that $0 \in I$. Let $N \in \mZ_+$ be a random variable and $X$ be uniformly distributed in $H_0$. Let $\{N_z\}_{z \in I}$ be i.i.d. random variables distributed as $N$ and $\{X_{iz}\}_{i \geq 1, z \in I}$ be i.i.d. uniform random variables distributed as $X$. We define the {\em perturbed Hexagonal lattice} as 
$\Phi^{pert} := \bigcup_{z \in I} \cup_{i = 1}^{N_z} \{z + X_{iz}\}.$
This point process is one among the family of {\em perturbed lattice point processes} (see~\cite[Sec. 4]{dcx-clust}). In simpler words, we are replicating centers of the hexagons and perturbing them uniformly within the hexagon. From \cite[Prop. 4.1]{dcx-clust}, we know that if $N_1 \leq_{cx} N_2$, then $\Phi(N_1) \leq_{dcx} \Phi(N_2)$.

\begin{figure}[!t]
\begin{center}
\begin{minipage}[b]{1\linewidth}
\vspace{-10ex}
\includegraphics[width=1.\linewidth]{F1F2_rBMnew_subsuper.pdf}
\end{minipage}
\end{center}
\vspace{-6ex}
\caption{\label{f.TwoComponents.subsuperP}
Mean fractions of nodes in the two largest components
of the sub- and super-Poisson Boolean models
$C(\Phi^{pert}_{Bin}(n),r)$ 
and  $C(\Phi^{pert}_{N\!Bin}(n),r)$,
respectively, as  functions of $r$; see Section~\ref{ss.NumerialPercol}.
These families of underlying point processes converge in $n$ 
to Poisson point process $\Phi_\lambda$
of intensity $\lambda=2/(\sqrt3)=1.154701$.
The dashed vertical line corresponds
to the radius $r=0.5576495$ which is believed to be close 
to the critical radius $r_c(\Phi_\lambda)$.}
  \end{figure}

Consider now two families of $dcx$ ordered point processes $\Phi^{pert}$ constructed
with different $N$'s.
Specifically,  assume {\em binomial} $Bin(n,1/n)$
and {\em negative binomial} $N\!Bin(n,1/(1+n))$  distributions
for $N$ with $n\ge 1$. The former assumption leads to
$dcx$ increasing in $n$ family of sub-Poisson
point processes $\Phi^{pert}=\Phi^{pert}_{Bin}(n)$ converging to Poisson
point process (of intensity $\lambda=2/(\sqrt3)=1.154701$) when $n\to\infty$,
while the latter assumption leads to $dcx$ decreasing  family
of super-Poisson point processes  $\Phi^{pert}=\Phi^{pert}_{N\!Bin}(n)$ converging
in $n$ to the same Poisson point process (cf~\cite{Whitt1985,dcx-clust}).
The critical radius $r_c(\Phi_\lambda)$ 
for this Poisson point process is known to be close to the value $r=0.5576495$;
\footnote{Two dimensional Boolean model with fixed
grains of radius  $r=0.5576495$ and Poisson point process of germs of intensity 
$\lambda=2/(\sqrt3)=1.154701$ has volume fraction 
$1-e^{-\lambda\pi r^2}=0.6763476$, which is given 
in~\cite{Quintanilla2007} as an estimator of the critical value 
for the percolation of the Boolean model. See also bound 
given in~\cite{Balister2005}.}.

In order to get an idea about the critical radius, we have simulated
300 realizations of the Boolean model $C(\Phi^{pert},r)$ for $r$ varying
from $r=0.5$ to $r=0.7$ in the
square window $[0,50]^2$.
The fraction of nodes in the two largest components
in the window  was calculated for each realization of the model for
each $r$ and the obtained results were averaged over 300
realizations of the model.
The resulting  {\em mean fractions of nodes in the two
largest components} as a function of $r$ are plotted
in figure~\ref{f.TwoComponents.subsuperP} for binomial (sub-Poisson) 
and negative binomial (super-Poisson) point processes, respectively. 
The obtained curves support the hypothesis that the
clustering of the point process of germs negatively impacts the percolation
of the corresponding Boolean models. For more extensive simulations
and figures,  
please refer to~\cite{subpoisson,perc-dcx}.
}

\section{Super-Poisson point process with null critical radius}
\label{sec:ex}
The objective of this section is to show an example of highly clustered 
and well percolating point process. More precisely we construct 
a Poisson-Poisson cluster point process 
(which is known to be $dcx$ larger than the Poisson point process) for which
$r_c = 0$. This invalidates the temptation to conjecture the monotonicity of 
$r_c$ with respect to the $dcx$ order (and hence with respect to void
probabilities and moment measures) of point process, in full generality.

\begin{exe}[Poisson-Poisson cluster point process with annular clusters]
\label{ex.PoPoClust} 
Let $\Phi_\alpha$ be the Poisson point process of intensity $\alpha$ on the plane
$\mR^2$; we call it the process of cluster centers.
For any $\delta,R,\mu$ such that $0<\delta\le R<\infty$ and
$0<\mu<\infty$, consider a Poisson-Poisson cluster point process  $\Phi_\alpha^{R,\delta,\mu}$; 
i.e., a Cox point process  with the random intensity measure 
$\Lam(\cdot) := \mu \sum_{X \in \Phi_{\alpha}} \cX(x,\cdot-x)$, where
 $\cX(x,\cdot)$ is  the uniform distribution on the annulus
$B_O(R)\setminus B_O(R-\delta)$ centered at $x$ of inner and
outer radii $R-\delta$ and $R$ respectively; see Figure~\ref{f.PoPoClust}.

By~\cite[Proposition~5.2]{snorder}, it is 
a super-Poisson point process.  More precisely,
$\Phi_{\lambda}\le_{dcx}\Phi_\alpha^{R,\delta,\mu}$, where
$\Phi_\lambda$ is homogeneous Poisson point process of intensity $\lambda=\alpha\mu$.
\end{exe}

\begin{figure}[!t]
\begin{center}
\includegraphics[width=1.\linewidth]{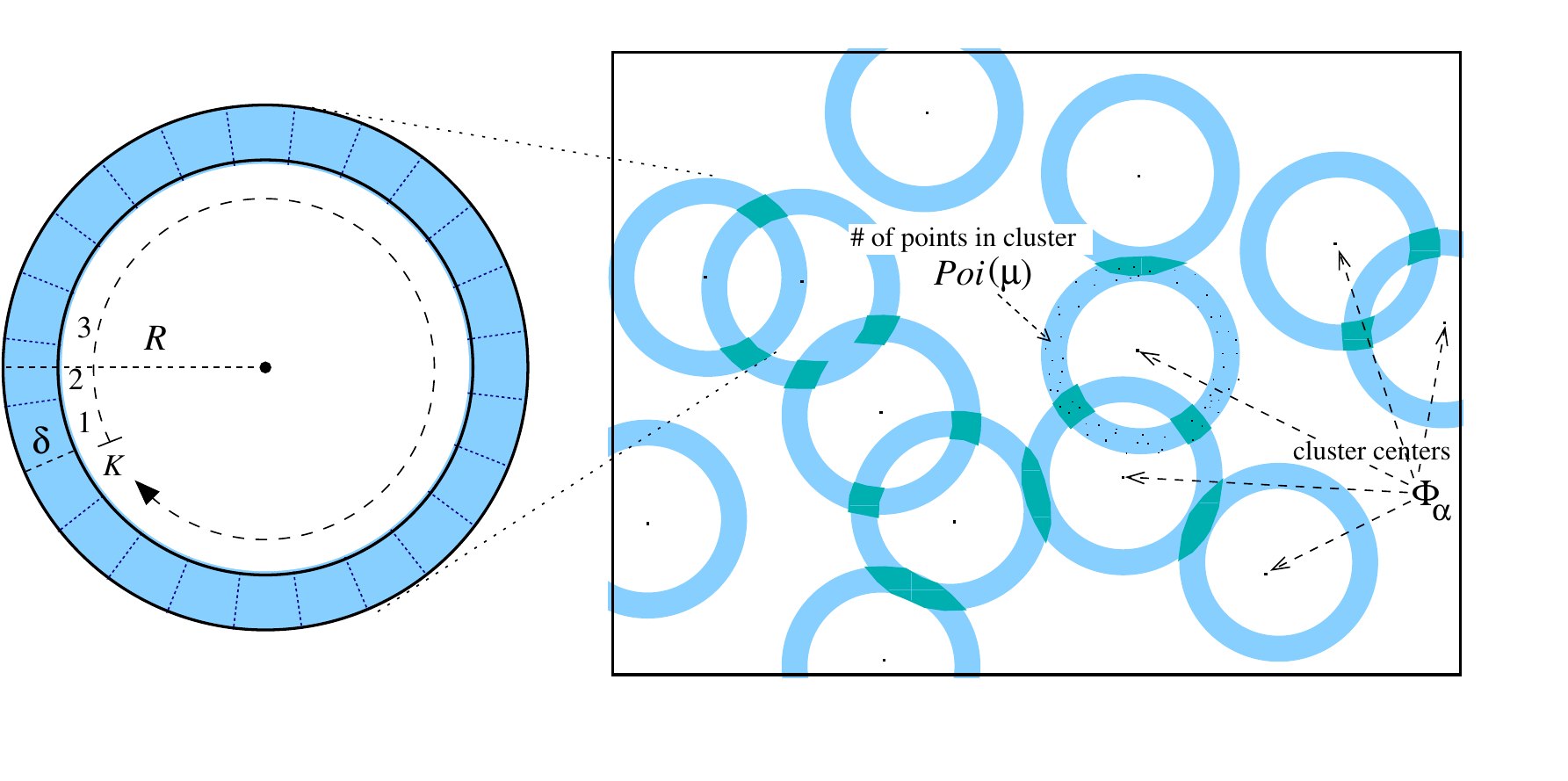}
\end{center}
\vspace{-7ex}
\caption{\label{f.PoPoClust}
Poisson-Poisson cluster process of annular cluster;
cf. Example~\ref{ex.PoPoClust}.}
  \end{figure}

For a given arbitrarily large intensity $\lambda<\infty$, 
taking sufficiently small $\alpha,R$, $\delta=R$ and sufficiently large $\mu$, it
is straightforward to construct a Poisson-Poisson cluster point process
$\Phi_\alpha^{R,R,\mu}$ with spherical clusters, 
which has an arbitrarily large critical radius $r_c$ for percolation.
It is less evident that one can construct a Poisson-Poisson cluster point process
that always percolates, i.e., with degenerate critical radius $r_c=0$.

\begin{prop}
\label{p.PopPoClust}
Let $\Phi_\alpha^{R,\delta,\mu}$ be a Poisson-Poisson cluster point process with 
annular clusters on the plane~$\mR^2$ as in
Example~\ref{ex.PoPoClust}. Given arbitrarily small $a,r>0$, there
exist constants $\alpha,\mu,\delta,R$ such that $0<\alpha,\mu,\delta,R<\infty,$ the intensity $\alpha\mu$ of
$\Phi_\alpha^{R,\delta,\mu}$ is equal to $a$ and 
the critical radius for percolation $r_c(\Phi_\alpha^{R,\delta,\mu})\le r$.
Moreover, for any $a>0$ there exists point process $\Phi$
of intensity $a$, which is $dcx$-larger than the Poisson point process of intensity
$a$, and which percolates for any $r>0$; i.e., $r_c(\Phi)=0$.
\end{prop}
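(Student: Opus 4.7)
The strategy is to use each annular cluster as a ``ring'' in the Boolean model. If within a single cluster the union of $r$-balls around its Poisson points covers the entire annulus, then that cluster contributes a solid annular ring of thickness $\ge 2r$ to $C(\Phi_\alpha^{R,\delta,\mu}, r)$. Two such rings are automatically joined in the Boolean model as soon as their annuli intersect, which (by a short check on the common perpendicular bisector) happens whenever the two centers are within distance $2R$. The first part of the proposition then splits into two tasks: (a)~each cluster's annulus is fully covered by the $r$-balls around its Poisson points with probability $p$ close to~$1$; (b)~the Boolean model of balls of radius~$R$ on the cluster-center process $\Phi_\alpha$ percolates. Both must be arranged simultaneously under the constraint $\alpha\mu = a$ with $\delta \le r$.

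Given $a, r > 0$, I would fix $\delta = r/4$ and look for parameters of the scale-invariant form $\alpha = C/R^2$, $\mu = a/\alpha = aR^2/C$, letting $R \to \infty$. By Poisson scaling, the center-Boolean model $C(\Phi_\alpha, R)$ percolates for all $C$ strictly above the critical value $\lambda_c^*$ of unit-radius Poisson Boolean percolation on $\mR^2$. For a single cluster, partitioning its annulus of area $\asymp R\delta$ into $M \asymp R/r$ cells of diameter at most $r/2$ and applying a Poisson union bound, the probability $p = p(R)$ that every cell contains at least one of the $\mu \asymp R^2$ points satisfies $p \ge 1 - M e^{-\mu/M}$, which tends to~$1$ as $R \to \infty$. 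Since this ``good-cluster'' event depends only on the Poisson points inside that cluster, and these are conditionally i.i.d.\ across clusters given $\Phi_\alpha$, the good centers form an independent Poisson thinning of $\Phi_\alpha$ of intensity $\alpha p = Cp/R^2$. Fixing $C > 2\lambda_c^*$ and then $R$ large enough that $p \ge 1/2$, the thinned process still percolates in the center-Boolean model. Two annuli at distance $d < 2R$ necessarily intersect, and since each good cluster's annulus lies entirely in $C(\Phi_\alpha^{R,\delta,\mu}, r)$, every Boolean edge between good centers produces a connection in the original model; the resulting infinite component yields $r_c(\Phi_\alpha^{R,\delta,\mu}) \le r$.

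For the second statement, I would take an independent superposition $\Phi := \bigcup_{n \ge 1} \Phi^{(n)}$, where $\Phi^{(n)}$ is a Poisson-Poisson cluster process constructed via the first part with intensity $a_n = a/2^n$ and $r_c(\Phi^{(n)}) \le 1/n$. The total intensity is $\sum_n a_n = a$. For any $r > 0$, choosing $n \ge \lceil 1/r\rceil$ gives $C(\Phi, r) \supseteq C(\Phi^{(n)}, r) \supseteq C(\Phi^{(n)}, 1/n)$, and the right-hand side percolates by the first part; hence $r_c(\Phi) = 0$. For the $dcx$ comparison, Example~\ref{ex.PoPoClust} gives $\Phi^{(n)} \ge_{dcx} \Phi_{a_n}$ for each $n$, and since $dcx$ order is preserved under independent superposition (a standard property, see~\cite{snorder}), we have $\Phi \ge_{dcx} \bigcup_n \Phi_{a_n} = \Phi_a$, the Poisson process of intensity~$a$.

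The step I expect to be delicate is reconciling conditions (a) and (b), since they pull $\alpha$ and $\mu$ in opposite directions under the fixed intensity constraint: making each cluster dense enough to cover its annulus forces $\mu$ large and hence $\alpha$ small, which threatens supercriticality of the center-Boolean model. The resolution is the scaling $\alpha R^2 \equiv C$: as $R \to \infty$ with $C$ held a little above $\lambda_c^*$, the center-Boolean model stays supercritical while $p(R) \to 1$, so after Poisson thinning the good centers still percolate. The remaining geometric facts, namely the annulus intersection for $d < 2R$ and the annulus cover estimate by small balls, are routine.
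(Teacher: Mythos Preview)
Your proof is correct and follows the same architecture as the paper's: declare a cluster ``good'' (the paper says ``open'') when every cell of a partition of its annulus contains at least one of its Poisson points, note that good centers form an independent Poisson thinning of $\Phi_\alpha$, reduce to percolation of the radius-$R$ disk Boolean model on the good centers via the observation that intersecting good annuli connect in $C(\Phi,r)$, and then superpose independent copies at intensities $a/2^n$ and target radii $1/n$ for the second statement (the paper cites \cite[Proposition~3.2(4)]{snorder} for preservation of $dcx$ under independent superposition, matching your appeal to \cite{snorder}).

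The one substantive difference is the parameter scaling. The paper fixes $\delta=r/2$, $K=2\pi R/r$ angular cells, chooses the somewhat delicate $\mu(R)=\frac{2\pi R}{r}\log\bigl(R/\sqrt{\log R}\bigr)$, and then computes that $p(R,\mu)R^2/\mu\to\infty$, so the thinned-center volume fraction can be made arbitrarily large. Your scale-invariant choice $\alpha=C/R^2$, $\mu=aR^2/C$ with $C$ fixed above the planar critical constant is more elementary: since $\mu/M\asymp Rr\to\infty$ while $M\asymp R/r$, the union bound gives $p(R)\to1$ directly, and the thinned intensity $\alpha p\,\pi R^2=\pi Cp$ stays above critical for all large $R$ without any asymptotic calculation. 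Your choice $\delta=r/4$ with cells of diameter at most $r/2$ also makes the covering-implies-connectivity step cleaner than the paper's bound $2(\delta+2\pi R/K)$ on inter-cell distances.
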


\begin{proof}
Let $a,r>0$ be given. Assume $\delta=r/2$. 
We will show that there exist sufficiently
large $\mu,R$ such that $r_c(\Phi_\alpha^{R,\delta,\mu})\le r$ where $\alpha= a/\mu$.
In this regard, denote $K:=2\pi R/r$ and 
assume that $R$ is chosen such that $K$ is an
integer. For a $\alpha > 0$ and any point (cluster center) $X_i\in\Phi_\alpha,$ let us partition
the annular support  $A_{X_i}(R,\delta):=B_{X_i}(R)\setminus
B_{X_i}(R-\delta)$  of the translation kernel $X_i + \cX(X_i,\cdot)$ 
(support of the Poisson point process constituting the  cluster centered at $X_i$)
into~$K$ cells as shown in
Figure~\ref{f.PoPoClust}. We will call  $X_i$
``open'' if in each of the $K$ cells of $A_{X_i}(R,\delta),$
there exists at least one replication of the point
$X_i$ among the Poisson $Poi(\mu)$ (with $\alpha= a/\mu$) number of total replications of the
point $X_i$.
 Note that given $\Phi_\alpha$, each point
$X_i\in\Phi_\alpha$ is open with probability
$p(R,\mu):=(1-e^{-\mu/K})^K$, independently of 
other points of $\Phi_\alpha$. Consequently, open points of
$\Phi_\alpha$ form a Poisson
point process of intensity $\alpha p(R,\mu)$; call it $\Phi_{open}$.
Note that the maximal distance between any
two points in two neighbouring cells of the same cluster is not
larger than $2(\delta+2\pi R/K)=2r$. Similarly,  
 the maximal distance between any
two points in two non-disjoint cells of two different clusters is not
larger than $2(\delta+2\pi R/K)=2r$. Consequently, if the Boolean
model $C(\Phi_{open},A_{0}(R,\delta))$ with annular grains 
percolates then the Boolean model 
$C(\Phi_\alpha^{R,\delta,\mu}, r)$ with spherical grains of radius $r$
percolates as well. The former Boolean model percolates if and only
if $C(\Phi_{open},B_{0}(R))$ percolates. Hence, in order to guarantee 
 $r_c(\Phi_\alpha^{R,\delta,\mu})\le r,$ it is enough to chose
$R,\mu$ such that the volume fraction
$1 - e^{-\alpha p(R,\mu)\pi R^2} = 1 - e^{-a p(R,\mu)\pi R^2/\mu}$  
is larger than the critical volume fraction
for the percolation of the spherical Boolean model on the plane.
In what follows, we will show that by choosing appropriate 
$R,\mu$ one can make  $p(R,\mu) R^2/\mu$ arbitrarily large. Indeed, 
take 
$$\mu:=\mu(R)=\frac{2\pi R}{r}\log{\frac R{\sqrt{\log R}}}=
\frac{2\pi R}{r}\Bigl(\log R- \frac12\log\log R\Bigr)\,.
$$ 
Then, as $R\to\infty$
\begin{eqnarray*}
p(R,\mu) R^2/\mu
&=&\frac{R^2}{\mu}(1-e^{-\mu r/(2\pi R)})^{2\pi R/r}\\
&=&\frac{Rr}{2\pi(\log R- \frac12\log\log R)}
\Bigl(1-\frac{\sqrt{\log R}}{R}\Bigr)^{2\pi R/r}\\
&=&
e^{O(1)+\log R -\log(2\pi(\log R- \frac12\log\log R))-O(1)\sqrt{\log R}}\to\infty\,.
\end{eqnarray*}
This completes the proof of the first statement.

In order to prove
the second statement, for a given $a>0$, denote $a_n:=a/2^n$ and let
$r_n=1/n$. Consider a sequence of independent (super-Poisson)
Poisson-Poisson cluster 
point processes $\Phi_n=\Phi_{\alpha_n}^{R_n,\delta_n,\mu_n}$ with  
intensities $\lambda_n:=\alpha_n\mu_n= a_n$, satisfying 
$r_c(\Phi_n)\le r_n$.  The existence of such point processes was
shown in the first part of the proof.
By the fact that $\Phi_n$ are super-Poisson for all $n\ge0$
and by~\cite[Proposition 3.2(4)]{snorder} the superposition
$\Phi=\bigcup_{n=1}^\infty \Phi_n$ is $dcx$-larger than Poisson point process
of intensity~$a$. Obviously $r_c(\Phi)=0$. This completes the proof of
the second statement.
\end{proof}

\begin{rem}
By Proposition~\ref{p.PopPoClust}, we know that
there exists point process $\Phi$ with intensity $a>0$ such that
$r_c(\Phi)=0$ and $\Phi_a\le_{dcx}\Phi$, where $\Phi_a$ is homogeneous
Poisson point process.  Since one knows that  $r_c(\Phi_a)>0$ so 
$\Phi$  is a counterexample to the  monotonicity of 
$r_c$ in  $dcx$ ordering of point processes.
\end{rem}

\section{Concluding remarks}
\label{sec:conclusion}
We come back to the initial heuristic discussed in the Introduction ---  
clustering in a point process should increase the  critical radius 
for the percolation of the corresponding continuum percolation model.
As we have seen, even a relatively strong tool such as the $dcx$ order
falls short, when it comes to making a formal statement of this heuristic. 

The two natural questions are what would be a more suitable measure
of clustering that can be used to affirm the heuristic and
whether $dcx$ order can satisfy a weaker version of the conjecture.

As regards the first question, one might start by looking at other
dependence orders such as super-modular, component-wise convex or
convex order but it has been already shown that the first two are not suited to comparison of clustering
in point processes (cf.~\cite[Section~4.4]{Yogesh_thesis}). 
Properties of convex order on point processes are yet to be investigated fully and this research direction is interesting in its own right, apart from its relation to the above conjecture. In a similar vein, it is of
potential interest to study other stochastic orders on point processes.

On the second question, it is pertinent to note that sub-Poisson point processes surprisingly exhibited non-trivial phase transitions for percolation. 
Such well-behavedness of the sub-Poisson point processes
makes us wonder if it is possible to prove a rephrased conjecture
saying that any homogeneous sub-Poisson point process has a smaller critical radius for
percolation than the Poisson point process of the same intensity.
Such a conjecture matches well
with~\cite[Conjecture~4.6]{BenjaminiStauffer2011}. 

\vspace{2ex}

\end{document}